\newtheorem{thm}{Theorem}%
\newtheorem{lem}[thm]{Lemma}
\newtheorem{prop}[thm]{Proposition}
\newtheorem*{thm-non}{Theorem}
\theoremstyle{definition}
\theoremstyle{remark}
\theoremstyle{plain}
\numberwithin{equation}{section}
\def\CC{{\mathbb C}}
\def\EE{{\mathbb E}}
\def\HH{{\mathbb H}}
\def\NN{{\mathbb N}}
\def\RR{{\mathbb R}}
\def\TT{{\mathbb T}}
\def\ZZ{{\mathbb Z}}
\def\one{{\mathbbm{1}}}
\def\S{\operatorname{S{}}}
\def\vecell{{\text{\boldmath$\ell$}}}
\def\vecm{{\text{\boldmath$m$}}}
\def\vecn{{\text{\boldmath$n$}}}
\def\vecs{{\text{\boldmath$s$}}}
\def\vecx{{\text{\boldmath$x$}}}
\def\vecy{{\text{\boldmath$y$}}}
\def\vecgamma{{\text{\boldmath$\gamma$}}}
\def\veceta{{\text{\boldmath$\eta$}}}
\def\vectheta{{\text{\boldmath$\theta$}}}
\def\vecnull{{\text{\boldmath$0$}}}
\def\scrB{{\mathcal B}}
\def\scrF{{\mathcal F}}
\def\scrH{{\mathcal H}}
\def\scrM{{\mathcal M}}
\def\scrT{{\mathcal T}}
\def\e{\mathrm{e}}
\def\i{\mathrm{i}}
\def\id{\operatorname{id}}
\def\C{\operatorname{C{}}}
\def\L{\operatorname{L{}}}
\def\Sp{\operatorname{Sp}}
\def\SL{\operatorname{SL}}
\def\PSL{\operatorname{PSL}}
\def\tr{\operatorname{tr}}
\def\sign{\operatorname{sign}}
\def\Stab{\operatorname{Stab}}
\def\Vol{\operatorname{Vol}}
\def\trans{\,^\mathrm{t}\!}
\def\mcg{\mathrm{MCG_g}}
\newcommand{\counting}{N_{f,L,\tau}}
\title[The moduli space of twisted Laplacians]{The moduli space of twisted Laplacians\\ and random matrix theory}
\author{Jens Marklof}
\address{Jens Marklof, School of Mathematics, University of Bristol, Bristol BS8 1UG, U.K.\newline \rule[0ex]{0ex}{0ex} \hspace{8pt}{\tt j.marklof@bristol.ac.uk}}
\author{Laura Monk}
\address{Laura Monk, School of Mathematics, University of Bristol, Bristol BS8 1UG, U.K.\newline \rule[0ex]{0ex}{0ex} \hspace{8pt}{\tt laura.monk@bristol.ac.uk}}
\date{15 July 2024/30 September 2024}
\thanks{Research supported by EPSRC grant EP/W007010/1. Data supporting this study are included within the article. MSC (2020): 81Q50 (Primary); 11F72, 30F60, 58J50 (Secondary)}
\begin{document}

\begin{abstract}
Rudnick recently proved that the spectral number variance for the Laplacian of a large compact hyperbolic surface converges, in a certain scaling limit and when averaged with respect to the Weil-Petersson measure on moduli space, to the number variance of the Gaussian Orthogonal Ensemble of random matrix theory. In this article we extend Rudnick's approach to show convergence to the Gaussian Unitary Ensemble for twisted Laplacians which break time-reversal symmetry, and to the Gaussian Symplectic Ensemble for Dirac operators. This addresses a question of Naud, who obtained analogous results for twisted Laplacians on high degree random covers of a fixed compact surface. 
\end{abstract}

\maketitle

\tableofcontents

\section{Introduction}\label{secIntro}

One of the long-standing conjectures in quantum chaos is that the spectral statistics of quantum
systems with chaotic classical limits are governed by random matrix theory \cite{BGS}. Despite
convincing heuristics \cite{Haake,SieberRichter}, there is currently not a single example where this
phenomenon can be established rigorously. What is more, counter examples are known in arithmetic
settings \cite{BGS2,BolteSteiner,LuoSarnak}, so the conjecture can only be assumed to hold
generically. Rudnick recently pointed out that random matrix statistics emerge in a certain scaling
regime for the spectrum of the Laplacian on hyperbolic surfaces of large genus \cite{Rudnick}, if
one averages over the moduli space of such surfaces with respect to the Weil-Petersson measure. (See
also \cite{AurichSteiner} for an early numerical study of spectral statistics on average over moduli
space.) These statistics are given by the Gaussian Orthogonal Ensemble (GOE), which is consistent
with Bohigas, Giannoni and Schmit's conjecture \cite{BGS}, since the underlying classical dynamics
(the geodesic flow on the hyperbolic surfaces) is time-reversal invariant. Twisted Laplacians, which
model the Aharonov-Bohm effect on hyperbolic surfaces, break time-reversal symmetry, and their
statistics are expected to follow the Gaussian Unitary Ensemble (GUE) \cite{BGS}. The purpose of
this paper is to construct the moduli space of such twisted Laplacians, which allows us to extend
Rudnick's result to these operators, and prove compatibility with GUE statistics in the same scaling
regime for twisted Laplacians on large genus hyperbolic surfaces with random flux. Analogous results,
both for GUE and for GOE statistics, were established by Naud \cite{Naud} in the case of random
covers of a fixed compact hyperbolic surface with and without Aharonov-Bohm flux. The mechanism that
produces the difference between GUE and GOE through the application of semiclassical trace formulas
was first pointed out by Berry \cite{Berry} in his diagonal approximation argument. The idea of
averaging over Aharonov-Bohm flux lines to obtain GUE statistics goes back to a paper of Fishman and
Keating \cite{Fishman}, which investigates the case of planar quantum billiards.

The framework developed in this paper also produces analogous results for Dirac operators, where we
establish convergence to the Gaussian Symplectic Ensemble (GSE) in the presence of time-reversal
symmetry, and otherwise to GUE.

Rudnick and Wigman have refined the results of \cite{Rudnick} by proving a central limit theorem for
the smoothed spectral counting function \cite{RudnickWigman23} and asymptotic almost sure
convergence of the spectral variance to GOE \cite{RudnickWigman24}, in the same scaling limit used
in \cite{Rudnick} and the present paper. We will leave these and other possible extensions to the
GUE and GSE setting to future study.

The set-up for twisted Laplacians is as follows; cf.~\cite{PhillipsSarnak} for more details and
background. We represent a compact oriented hyperbolic surface of genus $g$ as
$X=\Gamma\backslash\HH$, where $\Gamma\subset\PSL(2,\RR)$ is a co-compact Fuchsian group acting by
M\"obius transformations on the complex upper half plane $\HH=\RR+\i\RR_{>0}$. The Laplacian
$\Delta=y^2(\partial_x^2+\partial_y^2)$ acts on twice differentiable functions $\varphi: \HH\to\CC$
parametrised by $z=x+\i y$. Given a unitary character $\chi:\Gamma\to\CC^\times$ (with $\CC^\times$
the multiplicative group of complex numbers), we restrict to the Hilbert space $\scrH_\chi$ of
functions satisfying the relation
\begin{equation}
\varphi(\eta\cdot z) = \chi(\eta) \, \varphi(z) \quad \text{for all $\eta\in\Gamma$,}
\end{equation}
and with inner product 
\begin{equation}
\langle \varphi_1,\varphi_2 \rangle = \int_{\Gamma\backslash\HH} \overline{\varphi_1(z)}\; \varphi_2(z)\; \frac{dx\,dy}{y^2}.
\end{equation}
The restriction of $\Delta$ to $\scrH_\chi$ is essentially self-adjoint; we denote its self-adjoint extension by $\Delta_\chi$. The operator $-\Delta_\chi$ has non-negative discrete spectrum 
\begin{equation}
0\leq \lambda_0 \leq \lambda_1 \leq \ldots \to\infty
\end{equation}
with only accumulation point at $\infty$. If $\chi$ is not a real-valued function, we will refer
to~$\Delta_\chi$ as a twisted Laplacian. This operator models the Aharonov-Bohm effect of a free
quantum particle on the surface subject to $2g$ magnetic flux lines of strength
$\theta_1,\ldots,\theta_{2g}$. Explicitly,
$\chi(\eta) = \chi_\vectheta(\eta) = \exp(2\pi\i (\theta_1[\eta]_1 +\ldots+\theta_{2g}[\eta]_{2g}))$
with $[\eta]_j$ the topological winding number around the $j$th flux line of the closed geodesic
corresponding to $\eta$. Provided that \mbox{$\chi\neq \overline\chi$}, the twisted Laplacian
violates time-reversal symmetry, and one expects the spectral statistics to be described by the
unitary (GUE) rather than the orthogonal (GOE) random matrix ensemble.

It is convenient to define $r_j = \sqrt{\lambda_j - 1/4}$ with the branch of the square root
chosen so that $\arg r_j \in [0,\pi/2]$. That is, $r_j\in\RR_{\geq 0}$ for $\lambda_j\geq
1/4$ and $r_j\in \i (0,1/2]$ for $\lambda_j< 1/4$.
The main tool to analyse the distribution of $r_j$ is the Selberg trace formula, which relates spectral to geometric data. For any even test function 
\begin{equation}
h(r) = \int_\RR \hat h(u)\, \e^{\i u r} du ,
\end{equation}
with $\hat h\in\C^\infty(\RR)$ compactly supported, we have
\begin{equation}\label{STF}
 \sum_{j=0}^\infty h(r_j)   = (g-1) \int_\RR h(r) r \tanh(\pi r) \, dr + 
      \sum_{n=1}^{\infty} \sum_{\{\eta\}} \chi(\eta)^n\,
      \frac{\ell_\eta \hat{h}(n \ell_\eta)}{2 \sinh(n \ell_\eta/2)}.
  \end{equation}
Here the second sum is over all conjugacy classes $\{\eta\}=\{ \delta\eta\delta^{-1} : \delta\in\Gamma\}$ of primitive elements $\eta\in\Gamma$, $\eta\neq 1$ ($\eta$ is called primitive in $\Gamma$, if $\eta=\delta^n$ for some $\delta\in\Gamma$ implies $n=\pm 1$).
The set of primitive conjugacy classes $\{\eta\}$ is in one-to-one correspondence with primitive closed oriented geodesics on $X$, and their lengths are denoted by $\ell_\eta$. 

Following Rudnick \cite{Rudnick}, we consider the distribution of the smoothed counting function of the $r_j$,
\begin{equation}
\counting = \sum_{j=0}^\infty h_{L,\tau}(r_j) ,\qquad 
h_{L,\tau}(r) = f(L(r-\tau)) + f(L(r+\tau)) ,
\end{equation}
with $L,\tau>0$ and assuming $f$ is even with a smooth and compactly supported Fourier transform
\begin{equation}
\hat f(u) = \frac{1}{2\pi}\int_\RR f(r)\, \e^{-\i u r} dr .
\end{equation}
This ensures our assumptions on $h=h_{L,\tau}$ are satisfied. 
More precisely, we are interested in the statistical distribution of $\counting$ for random choices of $(X, \vectheta)$.

The main new construction presented in this paper is the moduli space $\widetilde\scrM_{g}$ of the
pair $(X,\vectheta)$ for a given genus $g$, together with a family of natural probability measures
$\mu_{g,q}$ on $\widetilde\scrM_{g}$. These measures may be viewed as an extension of the classical
Weil-Petersson probability measure $\mu_g^{\mathrm{WP}}$ on the moduli space $\scrM_g$ of hyperbolic
surfaces $X$ of genus $g$ considered in~\cite{Rudnick}.  The index
$q \in \mathbb{N} \cup \{ \infty\}$ indicates whether the measure on $\vectheta$ is Lebesgue
($q=\infty$) or discrete, supported on rationals with denominator $q\in\NN$. The cases $q=1$ and
$q=2$ correspond to the trivial resp.\ general real characters $\chi(\eta)\in\{\pm 1\}$ (where we
expect GOE), and $3\leq q\leq \infty$ to complex characters (where we expect GUE).

We will now state our principal result and postpone the precise definition of the extended moduli
space $\widetilde\scrM_{g}$ and measures $\mu_{g,q}$ to the remainder of the paper.  We denote by
$\EE_g^{\mathrm{WP}}$ and $\EE_{g,q}$ the expected values defined by
$$\EE_g^{\mathrm{WP}} [\ldots] = \int_{\scrM_g} [\ldots]\, d\mu_g^{\mathrm{WP}}, \qquad \EE_{g,q} [\ldots] = \int_{\widetilde\scrM_{g}} [\ldots]\, d\mu_{g,q}.$$ 

Let $\Sigma_{g,q}^2(L,\tau; f)$ denote the variance of the counting function $\counting$, defined as
\begin{equation}\label{Sigmagq}
\Sigma_{g,q}^2(L,\tau; f) = \EE_{g,q} \bigg[ \big| \counting - \EE_{g,q} [\counting] \big|^2 \bigg].
\end{equation}

\begin{thm}\label{thm1}
Fix $\tau>0$. For $f:\RR\to\CC$ even with smooth, compactly supported Fourier transform, we have, for $L\geq 2$, 
\begin{equation}
\lim_{g\to\infty} \Sigma_{g,q}^2(L,\tau; f) 
= 
\begin{cases}
\Sigma_{\mathrm{GOE}}^2(f) +O(L^{-2}\log L) & (q=1,2)\\
\Sigma_{\mathrm{GUE}}^2(f) +O(L^{-2}\log L) & (3\leq q \leq \infty) ,
\end{cases}
\end{equation}
with the random matrix densities
\begin{equation}
\Sigma_{\mathrm{GOE}}^2(f) = \int_\RR 2|x| \, |\hat f(x)|^2 dx, \qquad \Sigma_{\mathrm{GUE}}^2(f) = \int_\RR |x| \, |\hat f(x)|^2 dx.
\end{equation}
\end{thm}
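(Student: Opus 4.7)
The natural starting point is the Selberg trace formula \eqref{STF} applied to $h=h_{L,\tau}$. The identity contribution $(g-1)\int h_{L,\tau}(r) r \tanh(\pi r)\,dr$ is independent of $(X,\vectheta)$, so it plays no role in the variance. Writing $G_\chi$ for the geometric side of~\eqref{STF} (the double sum over $n\geq 1$ and conjugacy classes), one has $\Sigma_{g,q}^2(L,\tau;f)=\EE_{g,q}[|G_\chi-\EE_{g,q}[G_\chi]|^2]$. A direct computation gives $\hat h_{L,\tau}(u)=\frac{2\cos(\tau u)}{L}\hat f(u/L)$, supported in $[-LM,LM]$ with $M=\sup|\supp\hat f|$, so only pairs with $n\ell_\eta,n'\ell_{\eta'}\leq LM$ contribute. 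Expanding the square produces a double sum over $(n,\{\eta\})$, $(n',\{\eta'\})$ of the length factor $\tfrac{\ell_\eta\ell_{\eta'}\hat h_{L,\tau}(n\ell_\eta)\overline{\hat h_{L,\tau}(n'\ell_{\eta'})}}{4\sinh(n\ell_\eta/2)\sinh(n'\ell_{\eta'}/2)}$ weighted by the character moment $\chi(\eta)^n\overline{\chi(\eta')^{n'}}$.

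The key new ingredient, absent in~\cite{Rudnick}, is the fibrewise average over $\vectheta$. The measure $\mu_{g,q}$ will be constructed so that $\vectheta$ is independent of $X$ and uniformly distributed on $(\RR/\ZZ)^{2g}$ (for $q=\infty$) or on $(\tfrac{1}{q}\ZZ/\ZZ)^{2g}$ (for $q\in\NN$). Writing $\chi_\vectheta(\eta)=\e^{2\pi\i\vectheta\cdot[\eta]}$, character orthogonality yields
$$\EE_\vectheta\bigl[\chi(\eta)^n\overline{\chi(\eta')^{n'}}\bigr]=\prod_{j=1}^{2g}\one\{q\mid n[\eta]_j-n'[\eta']_j\},$$
with $q=\infty$ interpreted as equality in $\ZZ$. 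The diagonal pairs $(\{\eta\},n)=(\{\eta'\},n')$ always survive; the anti-diagonal $\{\eta'\}=\{\eta^{-1}\}$, $n'=n$ produces $\chi(\eta)^{2n}$ and survives when $q\mid 2n[\eta]_j$ for all $j$, which is automatic for $q\in\{1,2\}$ but constrains $[\eta]$ to a proper sublattice when $q\geq 3$; all other pairs impose non-trivial homological conditions on $n[\eta]-n'[\eta']$.

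Once the character averaging has isolated these pairs, the remaining Weil--Petersson integration is a direct adaptation of~\cite{Rudnick}. The diagonal contribution equals
$$\sum_n\EE_g^{\mathrm{WP}}\Bigl[\sum_{\{\eta\}\text{ prim.}}\frac{\ell_\eta^2\,|\hat h_{L,\tau}(n\ell_\eta)|^2}{4\sinh^2(n\ell_\eta/2)}\Bigr],$$
which Mirzakhani's integration formula and the large-$g$ asymptotic $V_{g,1}(\ell)/V_g\to\ell/(2\sinh(\ell/2))$ (uniform on compacta) reduce, via the substitution $x=\ell/L$ and the identity $|\hat h_{L,\tau}(x L)|^2=\tfrac{4\cos^2(\tau L x)}{L^2}|\hat f(x)|^2$ (whose oscillation averages to $\tfrac12$ against the slowly varying density), to $\int|x|\,|\hat f(x)|^2dx$. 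The $n\geq 2$ terms and the non-simple geodesics contribute $O(L^{-2}\log L)$ by Rudnick's estimates. For $q\in\{1,2\}$ the anti-diagonal reproduces the diagonal verbatim, doubling the answer and giving the GOE value; for $q\geq 3$ it is supported on primitive geodesics with $[\eta]$ in a proper sublattice of $H_1(X,\ZZ_q)$ and is shown to be negligible as $g\to\infty$, producing the GUE value.

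The hard part will be the last claim, namely that the Weil--Petersson expectation of the number of short primitive closed geodesics with prescribed homology modulo $q$ is suppressed by $q^{-2g}$ (or otherwise negligible) as $g\to\infty$. This requires a homology-refined version of Mirzakhani's integration formula---equivalently, an equidistribution statement for the homology classes of short simple geodesics in $H_1(X,\ZZ_q)$ on a random surface of large genus. Once such equidistribution is established, both the non-diagonal pairs that accidentally satisfy $n[\eta]\equiv n'[\eta']\pmod q$ and the restricted anti-diagonal for $q\geq 3$ are controlled uniformly, and the proof reduces to Rudnick's computation on each surviving term.
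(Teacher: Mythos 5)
You correctly identify the right diagonal/anti-diagonal dichotomy: the $\vectheta$-average imposes $n[\eta]\equiv n'[\eta']\bmod q$, the diagonal $\{\eta\}=\{\eta'\}$ gives the GUE variance, and the anti-diagonal $\{\eta'\}=\{\eta^{-1}\}$ survives at leading order only for $q\in\{1,2\}$, doubling it to GOE. This is the mechanism the paper uses. However, your proposed route to control the off-diagonal terms --- an equidistribution statement for the homology classes of short geodesics in $H_1(X,\ZZ_q)$ on a Weil--Petersson random surface --- is not what the paper does, is not needed, and would be a substantial additional theorem in its own right.

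The paper sidesteps equidistribution entirely by working on the extended moduli space $\widetilde\scrM_g=\Lambda_g(\ZZ)\backslash(\scrT_g\times\RR^{2g})$ and organizing the geometric side of the trace formula as a sum over $\mcg$-orbits of (multi-)loops on the base surface $S_g$, via the length-holonomy functions $F_\vecgamma$ of Section~\ref{secLength}. Proposition~\ref{WPint} exchanges the $\mu_{g,q}$-average for the Weil--Petersson average of the zeroth fiber Fourier coefficient, and Proposition~\ref{prop4} computes that coefficient as a sum over the orbit $\mcg/\Stab_\vecgamma$ of a quantity depending only on lengths, with the $\vectheta$-average replaced by the indicator $\one(n[\gamma]\equiv\trans\vecnull\bmod q\trans\ZZ^{2g})$ (resp.\ $n_1[\gamma_1]\equiv n_2[\gamma_2]$ for pairs). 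The crucial point is that $q\trans\ZZ^{2g}$ is $\Sp(2g,\ZZ)$-stable, so this homological condition takes the same value on every element of an $\mcg$-orbit; and the leading-order contribution comes from a \emph{single} $\mcg$-orbit of simple non-separating (multi-)loops, for which one may choose the representative with $[\gamma]=\trans(1,0,\dots,0)$. The condition then reduces to an explicit arithmetic statement about $n$ and $q$ (e.g.\ $2n\equiv0\bmod q$ for the anti-diagonal), requiring no information whatsoever about how homology classes of short geodesics distribute on a random surface. From there everything is indeed Rudnick's computation, including the $O(L^{-2}\log L)$ bounds for $n\geq 2$ and for non-simple or intersecting pairs, and the cancellation of the disjoint-pair contribution $|I_{f,q}(L,\tau)|^2$ against $|\EE_{g,q}[\counting^{\mathrm{osc}}]|^2$.

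Until you replace your equidistribution step by something of this nature --- or actually prove the equidistribution, which is a genuine gap in the argument as written --- the proof is incomplete. A secondary, minor inaccuracy: $\vectheta$ is not globally independent of $X$, since the torus bundle $\widetilde\scrM_g\to\scrM_g$ is non-trivial (Proposition~\ref{prop:bundle}); what saves the computation is precisely the $\Sp(2g,\ZZ)$-invariance of the fiber measure $\nu_q^{2g}$, which makes the pushed-down fiber integral well defined. Worth being precise about, since this invariance is the same structural fact that makes the orbit argument work.
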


The analogous result for Dirac operators is given in Section \ref{secDirac}.\\

The plan for this paper is as follows. In Section \ref{secAffine} we define a fiber bundle over the Teich\-m\"uller space of hyperbolic surfaces $X$ of genus $g$, and a natural action of the mapping class group on this extended Teichm\"uller space. The fiber at each point $X$ is a torus which parametrises the twist parameter of the Laplacian. By quotienting the extended Teichm\"uller space by the mapping class group, we obtain a natural moduli space of twisted Laplacians. We then define invariant probability measures on this space in Section \ref{secInvariant}, which model continuous and discrete random twists. The projection of these measures to the base moduli space is the classical Weil-Petersson measure. Although not essential for subsequent discussion, we also introduce the Fourier transform on the fiber, and describe the action of the mapping class group on Fourier space. In Section~\ref{secLength} we focus on a particular class of functions on the extended moduli space. These are length-holonomy functions which generalise Mirzakhani's geometric functions and appear on the geometric side of the Selberg trace formula for twisted Laplacians. The precise parametrisation of twisted Laplacians in terms of the extended Teichm\"uller space is explained in Section \ref{secTwisted}. Section \ref{secMain} states and proves the main lemmas that are required for the proof of Theorem \ref{thm1} in Section \ref{secProof}. We extend our approach to Dirac operators with and without time-reversal symmetry in Section \ref{secDirac}, and prove convergence to GSE resp.~GUE statistics. \\

\textbf{Acknowledgements.}
We thank Zeev Rudnick for very helpful comments on the first drafts of this paper.

\section{The affine mapping class group}\label{secAffine}

Let $S_g$ be a given compact, connected, oriented, smooth surface of genus $g\geq 2$, which we shall refer to as the base surface.  The Teichmüller space $\mathcal{T}_g$ is the set of equivalence classes of marked hyperbolic surfaces $(X, \phi)$, where (i) $X$ is a hyperbolic surface, (ii) $\phi$ is a diffeomorphism $S_g \rightarrow X$ (the marking), and (iii) $(X_1, \phi_1)$, $(X_2, \phi_2)$ are called {\em equivalent} if there exists an isometry $m : X_1 \rightarrow X_2$  such that $m \circ \phi_1$ is isotopic to $\phi_2$.

The mapping class group $\mcg$ of the base surface $S_g$ is defined as the group of positive (i.e., orientation-preserving) diffeomorphisms $\psi : S_g \rightarrow S_g$ modulo isotopies.
The mapping class group acts naturally on the Teichmüller space $\mathcal{T}_g$ by 
\begin{equation}
  \psi \cdot (X, \phi) = (X, \phi \circ \psi^{-1}).
\end{equation}
This action allows us to ``forget'' the marking, and so the quotient $\mathcal{M}_g:=\mcg\backslash\mathcal{T}_g$ yields the moduli space of hyperbolic surfaces $X$ of genus $g$ modulo isometries.

The first homology $H_1(S_g; \ZZ)$ of the base surface $S_g$ is the
abelianisation of its fundamental group $\pi_1(S_g)$. Let us fix a basis for $H_1(S_g; \ZZ)$
by choosing the family $a_1,\ldots,a_g$, $b_1,\ldots,b_g$ of fundamental oriented loops, so that 
\begin{equation}
\pi_1(S_g) = \langle a_i, b_i \, | \, [a_1, b_1] \cdots [a_g, b_g] \rangle, 
\end{equation}
where
$[a, b] = a b a^{-1} b^{-1}$ is the commutator. $H_1(S_g; \ZZ)$ can be
equipped with a symplectic form $\hat\iota$ for which 
$a_1,\ldots,a_g,b_1,\ldots,b_g$ form the standard symplectic basis, so that $\hat\iota$ is represented by the matrix 
\begin{equation}
  J_g =
  \begin{pmatrix}
    0 & I_g \\ -I_g & 0
  \end{pmatrix}.
\end{equation}

Any positive diffeomorphism $\psi : S_g \rightarrow S_g$ induces an automorphism
$\psi_\ast : H_1(S_g; \ZZ) \rightarrow H_1(S_g; \ZZ)$, which preserves $\hat\iota$,
and this automorphism does not depend on the isotopy class of $\psi$. This allows us to define the
symplectic representation of the mapping class group,
\begin{equation}
   \mcg  \rightarrow \Sp(2g, \ZZ) , \qquad \psi \mapsto M_\psi ,
\end{equation}
which associates with each $\psi \in \mcg$ a symplectic matrix $M_\psi$ representing $\psi_\ast$ in the basis
$(a_i, b_i)_{1 \leq i \leq g}$.  Here $\Sp(2g, \ZZ)$ denotes the set of $2g \times 2g$ matrices $M$
with integer coefficients satisfying $\trans M  J_g M = J_g$.

In order to define the affine mapping class group $\Lambda_g(\RR)$, we need to attach additional data to the surface $X$, in the form of a row vector $\vectheta\in\RR^{2g}$ representing the Aharonov-Bohm flux. 
We denote by $\widetilde\scrT_g=\scrT_g\times\RR^{2g}$ the extended Teichm\"uller space. 

A natural action of $\mcg$ on the extended Teichm\"uller space $\widetilde\scrT_g$ is given by
  \begin{equation}\label{MCGaction}
    \psi \cdot (X, \phi, \vectheta) = (X, \phi \circ \psi^{-1}, \vectheta M_\psi^{-1}).
  \end{equation}
  We furthermore define the $\RR^{2g}$ action on $\widetilde\scrT_g$ by translation of the vector
  component by
  \begin{equation}
    \veceta \cdot (X, \phi, \vectheta) = (X, \phi, \vectheta + \veceta) .
  \end{equation}
This suggests the definition of the semidirect product, the {\em affine mapping class group}
\begin{equation}\Lambda_g(\RR)= \mcg\ltimes \RR^{2g}\end{equation} with multiplication law
  \begin{equation}
    (\psi, \veceta) (\psi', \veceta') := (\psi \circ \psi', \veceta + \veceta' M_{\psi}^{-1}).
  \end{equation}
Note that $(\psi, \veceta) = (\id, \veceta)(\psi, \vecnull)$ and that $A_g(\RR)=\{\id\}\ltimes \RR^{2g}$ is a normal subgroup of $\Lambda_g(\RR)$. The action of $\Lambda_g(\RR)$ on $\widetilde\scrT_g$ is thus
 \begin{equation}\label{2.9}
    (\psi, \veceta) \cdot (X, \phi, \vectheta) := (X, \phi \circ \psi^{-1}, \veceta + \vectheta M_{\psi}^{-1}).
  \end{equation}

Define the subgroup $\Lambda_g(\ZZ)=\mcg\ltimes \ZZ^{2g}$ and its normal subgroup $A_g(\ZZ)=\{\id\}\ltimes \ZZ^{2g}$. Furthermore, set $\TT^{2g}=\RR^{2g}/\ZZ^{2g}$. We then have the following.

\begin{prop}\label{prop:bundle}
The extended moduli space $\widetilde\scrM_g:=\Lambda_g(\ZZ)\backslash\widetilde\scrT_g$ is a non-trivial fiber bundle over $\scrM_g$ with fiber $\TT^{2g}$. 
\end{prop}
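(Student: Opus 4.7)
The plan is to realise the bundle structure by quotienting $\widetilde\scrT_g$ in stages, first by the normal subgroup $A_g(\ZZ)=\{\id\}\ltimes\ZZ^{2g}$ and then by the residual action of $\mcg$. Since $A_g(\ZZ)$ acts trivially on $\scrT_g$ and freely by integer translations on $\RR^{2g}$, one has $A_g(\ZZ)\backslash\widetilde\scrT_g = \scrT_g\times\TT^{2g}$, and the induced action of $\Lambda_g(\ZZ)/A_g(\ZZ)=\mcg$ reads, from \eqref{2.9},
$$\psi\cdot(X,\phi,[\vectheta])=(X,\phi\circ\psi^{-1},[\vectheta M_\psi^{-1}]).$$
Thus $\widetilde\scrM_g = \mcg\backslash(\scrT_g\times\TT^{2g})$, with $\mcg$ acting diagonally (on the base as usual, and on the fiber through the symplectic representation $\psi\mapsto M_\psi$, which preserves $\ZZ^{2g}$). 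The first-factor projection $\scrT_g\times\TT^{2g}\to\scrT_g$ is $\mcg$-equivariant and descends to a continuous surjection $\pi:\widetilde\scrM_g\to\scrM_g$, realising $\widetilde\scrM_g$ as the bundle associated to the orbifold cover $\scrT_g\to\scrM_g$ via the $\mcg$-action on $\TT^{2g}$.

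For local triviality I would use proper discontinuity of the $\mcg$-action on $\scrT_g$: fix a lift $(X_0,\phi_0)\in\scrT_g$ with finite stabiliser $G=\Stab_{\mcg}(X_0,\phi_0)$ and choose a $G$-invariant open neighbourhood $V\subset\scrT_g$ of $(X_0,\phi_0)$ with $\psi V\cap V=\emptyset$ for all $\psi\in\mcg\setminus G$. Then $U:=G\backslash V$ is an open orbifold chart about the image point in $\scrM_g$, and
$$\pi^{-1}(U) \cong G\backslash(V\times\TT^{2g}),$$
with $G$ acting on $V$ as before and on $\TT^{2g}$ through the symplectic representation. On the smooth locus of $\scrM_g$ (where $G$ is trivial) this is a genuine product $V\times\TT^{2g}$, so $\pi$ is a topological $\TT^{2g}$-bundle there, with transition functions in $\Sp(2g,\ZZ)$ acting linearly on $\TT^{2g}$; at the remaining orbifold points the chart $G\backslash(V\times\TT^{2g})$ supplies the correct local model.

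For non-triviality I would exploit that the monodromy of this associated bundle is precisely the symplectic representation $\mcg\to\Sp(2g,\ZZ)$ acting on $\TT^{2g}$, which is surjective (realised already by Dehn twists about a standard set of curves) and therefore highly non-trivial. A globally trivial bundle $\scrM_g\times\TT^{2g}$ would force this monodromy to be trivial, or equivalently would make the orbifold fundamental group of the total space into the direct product $\mcg\times\ZZ^{2g}$ rather than the genuine semidirect product $\Lambda_g(\ZZ)=\mcg\ltimes\ZZ^{2g}$. The principal technical subtlety is the care needed at orbifold points of $\scrM_g$ with non-trivial isotropy $G$: the fiber $\TT^{2g}$ must be understood as the generic fiber, the correct local model at these points being the quotient $G\backslash(V\times\TT^{2g})$ rather than a plain product.
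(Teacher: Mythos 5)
Your proof follows the same two-stage quotient strategy as the paper's (first by $A_g(\ZZ)$ to get the trivial bundle $\scrT_g\times\TT^{2g}$, then by the residual $\mcg$-action, with non-triviality deduced from the non-trivial symplectic monodromy), so the approach is essentially identical. You supply considerably more detail than the paper does — the explicit proper-discontinuity argument for local triviality and the careful handling of orbifold points with non-trivial isotropy — which the paper leaves implicit in the single sentence ``This proves that $\widetilde\scrM_g$ is a torus bundle over $\scrM_g$.''
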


\begin{proof}
We have that $A_g(\ZZ)\backslash\widetilde\scrT_g = \scrT_g \times \TT^{2g}$. That is, we can view $A_g(\ZZ)\backslash\widetilde\scrT_g$ as a trivial fiber bundle over $\scrT_g$ with fiber $\TT^{2g}$. This proves that $\widetilde\scrM_g$ is a torus bundle over $\scrM_g$. By the normality of $A_g(\ZZ)$, the action of the mapping class group defined in \eqref{MCGaction} induces an action on $\scrT_g \times \TT^{2g}$ given by
  \begin{equation}\label{MCGaction2}
    \psi \cdot (X, \phi, \vectheta+\ZZ^{2g}) = (X, \phi \circ \psi^{-1}, \vectheta M_\psi^{-1}+\ZZ^{2g}).
  \end{equation}
Note that $\psi\mapsto M_\psi\in\Sp(2g,\ZZ)$ acts non-trivially on $\TT^{2g}$, hence the bundle is non-trivial. 
\end{proof}

Formula \eqref{2.9} shows that if $\scrF_g$ is a fundamental domain for the action of $\mcg$ in $\scrT_g$, then $\widetilde\scrF_g=\scrF_g\times (-\tfrac12,\tfrac12]^{2g}$ is a fundamental domain for the action of $\Lambda_g(\ZZ)$ in $\widetilde\scrT_g$.

We furthermore note that $\scrM_g$ can be represented as $\Lambda_g(\RR)\backslash\widetilde\scrT_g$, and the base projection $\Pi:\widetilde\scrM_g\to\scrM_g$ is then given by $\Lambda_g(\ZZ)\cdot (X, \phi, \vectheta) \mapsto \Lambda_g(\RR)\cdot (X, \phi, \RR^{2g})$.

\section{Invariant measures and Fourier transforms}
\label{secInvariant}

We denote by $\Vol_g^{\mathrm{WP}}$ the Weil-Petersson measure on $\scrT_g$. Since it is $\mcg$-invariant, it descends to a measure $\mu_g^{\mathrm{WP}}$ on $\scrM_g$. This measure is finite and we normalise it to be a probability measure. That is, for any Borel set $\scrB\subset\scrM_g$,
\begin{equation}
\mu_g^{\mathrm{WP}}(\scrB) = \frac{\Vol_g^{\mathrm{WP}}(\scrF_g\cap \pi^{-1}\scrB)}{\Vol_g^{\mathrm{WP}}(\scrF_g)},
\end{equation}
where $\pi$ is the canonical projection $\scrT_g\to\scrM_g$ and $\scrF_g$ is a measurable fundamental domain of $\mcg$ in $\scrT_g$.

Let us now lift this measure to natural measures on the extended Teichm\"uller and moduli spaces. For $q\in\NN\cup\{\infty\}$, we define the measure $\Vol_{g,q}$ on $\widetilde\scrT_g$ by
\begin{equation}\label{Volgq}
\Vol_{g,q} = \Vol_g^{\mathrm{WP}} \times \nu_q^{2g}
\end{equation}
where $\nu_\infty^{k}=\Vol_{\RR^{k}}$ is the standard Lebesgue measure on $\RR^k$ and otherwise
\begin{equation}\label{nudef}
\nu_q^{k} = \frac{1}{q^{k}} \sum_{\vecm\in\ZZ^{k}} \delta_{q^{-1} \vecm} .
\end{equation}
Here $\delta_\veceta$ denotes the Dirac mass at $\veceta$. The point of these definitions is that
each $\nu_q^{2g}$ is invariant under the action of $\Sp(2g,\ZZ)$ (by linear transformations) and translation invariant by a subgroup containing $\ZZ^{2g}$. Therefore they descend to natural $\Sp(2g,\ZZ)$-invariant probability measures on $\TT^{2g}$, which for simplicity we will also denote by $\nu_q^{2g}$. Explicitly, $\nu_\infty^{k} = \Vol_{\TT^{k}}$ the Haar-Lebesgue probability measure and, for $q\in\NN$,
\begin{equation}\label{nudefq}
\nu_q^{k} = \frac{1}{q^{k}} \sum_{\vecm\in\ZZ_q^{k}} \delta_{q^{-1} \vecm} 
\end{equation}
with the shorthand $\ZZ_q=\ZZ/q\ZZ$.

With these assumptions, the measure $\Vol_{g,q}$ is invariant under the extended mapping class group $\Lambda_g(\ZZ)$, hence descends, after the appropriate normalisation, to a probability measure $\mu_{g,q}$ defined on Borel sets $\scrB\subset\widetilde\scrM_g$ by
\begin{equation}\label{mugq}
\mu_{g,q}(\scrB) = \frac{\Vol_{g,q}(\widetilde\scrF_g\cap \pi^{-1}\scrB)}{\Vol_{g,q}(\widetilde\scrF_g)}
\end{equation}
where $\pi$ now denotes the canonical projection
$\widetilde\scrT_{g} \rightarrow \widetilde\scrM_{g}$.  Note that
$\Vol_{g,q}(\widetilde\scrF_g)=\Vol_{g}(\scrF_g)$, since $\nu_q^{2g}(\TT^{2g})=1$.

In the following we denote by $\trans\ZZ_q^{2g}$ the set of integer column vectors $\trans\vecm$
with $2g$ coefficients in $\ZZ_q=\ZZ/q\ZZ$.  For functions $F:\scrT_g\times\TT^{2g}\to\CC$ so that
$F(X,\phi,\,\cdot\,)\in\L^1(\TT^{2g},\nu_q^{2g})$, we define the {\em fiber Fourier transform}
$\widehat F_q: \scrT_g\times\trans\ZZ_q^{2g}\to\CC$ by
\begin{equation}\label{sixfive}
\widehat F_q(X,\phi,\trans\vecm) = \int_{\TT^{2g}} F(X,\phi,\vectheta) \, e(-\vectheta \trans\vecm)\,d\nu_q^{2g}(\vectheta) ,
\end{equation}
with $e(x)=\mathrm{e}^{2 i \pi x}$. 
This is the standard Fourier transform for $q=\infty$ (put $\ZZ_q=\ZZ$), and the discrete Fourier transform if $q\in\NN$. Of particular interest is the zeroth Fourier coefficient, which we denote by
\begin{equation}
\breve F_q(X,\phi) = \widehat F_q(X,\phi,\trans\vecnull) .
\end{equation}

We define the dual action of $\mcg$ on $\scrT_g\times\trans\ZZ^{2g}$ by
\begin{equation}
\psi \star (X, \phi, \trans\vecm) = (X, \phi \circ \psi^{-1}, M_\psi \trans\vecm).
\end{equation}
Then
\begin{equation}\label{FTstar}
\widehat F_q(\psi\star (X,\phi,\trans\vecm)) = \int_{\TT^{2g}} F(\psi\cdot (X,\phi,\vectheta)) \, e(-\vectheta \trans\vecm)\,d\nu_q^{2g}(\vectheta) .
\end{equation}
Hence, if $F$ is invariant under $\mcg$, and thus can be viewed as a function on $\widetilde\scrM_g$, then 
\begin{equation}
\widehat F_q(\psi\star (X,\phi,\trans\vecm)) = \widehat F_q(X,\phi,\trans\vecm)
\end{equation}
and
\begin{equation}
\breve F_q(\psi\cdot (X,\phi)) = \breve F_q(X,\phi)
\end{equation}
for all $\psi\in\mcg$.

In the following, we identify $\mcg$-invariant functions of $\scrT_g$ with functions of $\scrM_g$, and $\Lambda_g(\ZZ)$-invariant functions of $\widetilde\scrT_g$ with functions of $\widetilde\scrM_g$, using the same notation. 

With this convention, we can define the fiber Fourier transform for $F\in\L^1(\widetilde\scrM_g,\mu_{g,q})$ by
\begin{equation}
\widehat F_q(X,\phi,\trans\vecm) = \int_{\Pi^{-1}(X,\phi)} F(X,\phi,\vectheta) \, e(-\vectheta \trans\vecm)\,d\nu_q^{2g}(\vectheta)
\end{equation}
where $\widehat F_q$ is now viewed as a function of the quotient $\widetilde\scrM_{g,q}^\star:=\mcg\backslash(\scrT_g\times\trans\ZZ_q^{2g})$ with respect to the $\star$-action. The zeroth Fourier coefficient $\breve F_q$ is identified with a function of $\scrM_g$. The above discussion in particular implies the following.

\begin{prop}\label{WPint}
If $F\in\L^1(\widetilde\scrM_g,\mu_{g,q})$ then $\breve F_q\in\L^1(\scrM_g,\mu_g^{\mathrm{WP}})$ with
\begin{equation}
\int_{\widetilde\scrM_g} F \,d\mu_{g,q} = \int_{\scrM_g} \breve F_g \,d\mu_g^{\mathrm{WP}}.
\end{equation}
\end{prop}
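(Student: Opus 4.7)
The plan is to unfold the integral using the fundamental domain $\widetilde\scrF_g = \scrF_g \times (-\tfrac12, \tfrac12]^{2g}$ exhibited just before the proposition and apply Fubini to the product measure $\Vol_{g,q} = \Vol_g^{\mathrm{WP}} \times \nu_q^{2g}$. First, I would use the definition \eqref{mugq} to rewrite
\[
\int_{\widetilde\scrM_g} F \, d\mu_{g,q} = \frac{1}{\Vol_g^{\mathrm{WP}}(\scrF_g)} \int_{\scrF_g} \int_{(-\tfrac12, \tfrac12]^{2g}} F(X, \phi, \vectheta) \, d\nu_q^{2g}(\vectheta)\, d\Vol_g^{\mathrm{WP}}(X, \phi),
\]
where I have used $\Vol_{g,q}(\widetilde\scrF_g) = \Vol_g^{\mathrm{WP}}(\scrF_g)$, valid since $\nu_q^{2g}(\TT^{2g}) = 1$.

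Next, since $F$ is $\Lambda_g(\ZZ)$-invariant, it is in particular $A_g(\ZZ)$-invariant, i.e.\ $\ZZ^{2g}$-periodic in $\vectheta$. The cube $(-\tfrac12, \tfrac12]^{2g}$ is a fundamental domain for $\ZZ^{2g}$ in $\RR^{2g}$, and under the identification of $\nu_q^{2g}$ with the induced probability measure on $\TT^{2g}$, the inner integral equals $\breve F_q(X, \phi)$. From the discussion following \eqref{FTstar}, $\breve F_q$ is $\mcg$-invariant and hence descends to $\scrM_g$, so the outer integral folds to $\int_{\scrM_g} \breve F_q \, d\mu_g^{\mathrm{WP}}$, yielding the claimed identity.

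For the integrability claim, I would run the same computation with $|F|$ in place of $F$ via Tonelli: this gives $\int_{\scrM_g} \bigl(\int_{\TT^{2g}} |F| \, d\nu_q^{2g}\bigr)\, d\mu_g^{\mathrm{WP}} = \int_{\widetilde\scrM_g} |F| \, d\mu_{g,q} < \infty$, so the inner integral is almost-everywhere finite and dominates $|\breve F_q|$; thus $\breve F_q \in \L^1(\scrM_g, \mu_g^{\mathrm{WP}})$.

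There is no genuine obstacle here: the proposition is essentially a Fubini--Tonelli statement tailored to the product structure built into the definitions of $\widetilde\scrF_g$ and $\Vol_{g,q}$. The only point requiring minor care is the verification that $\breve F_q$ really is $\mcg$-invariant, but this is exactly the earlier observation that $\widehat F_q(\psi \star (X, \phi, \trans\vecm)) = \widehat F_q(X, \phi, \trans\vecm)$ specialised to $\trans\vecm = \trans\vecnull$.
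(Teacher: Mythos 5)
Your proof is correct and fills in exactly what the paper leaves implicit when it states the proposition follows from ``the above discussion'': unfolding via the fundamental domain $\widetilde\scrF_g=\scrF_g\times(-\tfrac12,\tfrac12]^{2g}$, applying Fubini--Tonelli to the product measure $\Vol_{g,q}=\Vol_g^{\mathrm{WP}}\times\nu_q^{2g}$, identifying the inner integral with $\breve F_q$, and invoking the $\mcg$-invariance of $\breve F_q$ established after \eqref{FTstar}. This is the same route the paper intends, just written out.
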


We will now apply the above formula to a class of functions that arise in the geometric analysis of twisted Laplacians.
The reduction to a standard integral over moduli space will then allow us to apply Mirzakhani's integration formulas.

\section{Length-holonomy functions}\label{secLength}

Given a closed non-contractible oriented loop $\gamma$ on $S_g$, let $\ell_\gamma(X,\phi)$ denote the length of the oriented closed geodesic $\phi(\gamma)$ on $X$, and $\Stab_\gamma$ the stabiliser of $\gamma$ in $\mcg$. We denote by $[\gamma]\in H_1(S_g;\mathbb{Z})$ the holonomy of $\gamma$ represented as a column vector written in the basis $(a_i, b_i)_{1 \leq i \leq g}$, as defined above.

Let $f:\RR \times\TT\to\CC$ be a smooth compactly supported function. We naturally
associate to $f$ and $\gamma$ a function $f_\gamma$ defined on $\scrT_g\times\TT^{2g}$ by
\begin{equation} \label{basic}
f_\gamma(X,\phi,\vectheta) = f(\ell_\gamma(X,\phi) ,\vectheta\, [\gamma]) .
\end{equation}
For $\psi\in\Stab_\gamma$, we have $\ell_\gamma(X,\phi\circ\psi^{-1}) = \ell_\gamma(X,\phi)$ and $\vectheta M_\psi^{-1}\, [\gamma]=\vectheta \,[\psi^{-1}(\gamma)]=\vectheta \,[\gamma]$, and therefore 
\begin{equation}
f_\gamma(\psi\cdot(X,\phi,\vectheta)) = f_\gamma(X,\phi,\vectheta).
\end{equation}
We now define the function $F_\gamma$ by
\begin{equation}
F_\gamma(X,\phi,\vectheta)  = \sum_{\psi\in\mcg/\Stab_\gamma} f_\gamma(\psi\cdot(X,\phi,\vectheta)) .
\end{equation}
This function is, by construction, $\mcg$-invariant, i.e., $F_\gamma(\psi\cdot(X,\phi,\vectheta)) = F_\gamma(X,\phi,\vectheta)$ for all $\psi\in\mcg$, and may therefore be viewed as a function on $\widetilde\scrM_g$. Its fiber Fourier transform is as follows.

\begin{prop}\label{prop003}
  For $\gamma$ as above, $q\leq\infty$, $\vecm\in\ZZ_q^{2g}$ and $f$ smooth with compact support, we
  have
\begin{equation}\label{FTimes}
\widehat F_{\gamma,q}(X,\phi,\trans\vecm)  = \sum_{\psi\in\mcg/\Stab_\gamma} \widehat f_{\gamma,q}(\psi\star(X,\phi,\trans\vecm)) ,
\end{equation}
where the fiber Fourier transform $\hat{f}_{\gamma,q}$ of $f_\gamma$ is equal to
\begin{equation} \label{FTimes2}
\widehat f_{\gamma,q}(X,\phi,\trans\vecm) = 
\sum_{n\in\ZZ_q} 
\widehat f_q(\ell_\gamma(X,\phi) , n) \;
\one(\trans\vecm = n [\gamma]\bmod q\trans\ZZ^{2g})
\end{equation}
with, for $(x, n)\in\RR\times\ZZ_q$,
\begin{equation}
\widehat f_q(x, n) = \int_\TT f(x,y) \, e(-ny) \, d\nu_q^1(y) .
\end{equation}
\end{prop}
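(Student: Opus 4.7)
The plan is to prove both identities by direct Fourier-theoretic computation, treating the cases $q=\infty$ and $q\in\NN$ uniformly.

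For \eqref{FTimes}, I start from the definition of the fiber Fourier transform and substitute the defining sum for $F_\gamma$:
$$\widehat F_{\gamma,q}(X,\phi,\trans\vecm) = \int_{\TT^{2g}} \sum_{\psi\in\mcg/\Stab_\gamma} f_\gamma(\psi\cdot(X,\phi,\vectheta))\, e(-\vectheta\trans\vecm)\, d\nu_q^{2g}(\vectheta).$$
Because $f$ has compact support in its length variable and the length spectrum of $X$ is discrete, only finitely many cosets $\psi\Stab_\gamma$ produce a non-zero integrand for a given $(X,\phi)$: each coset corresponds to a distinct free homotopy class of closed curves on $X$ in the $\mcg$-orbit of $\phi(\gamma)$, and only finitely many such classes have geodesic representative with length in $\supp f$. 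This justifies interchanging sum and integral, after which each resulting term equals $\widehat f_{\gamma,q}(\psi\star(X,\phi,\trans\vecm))$ by \eqref{FTstar} applied to $f_\gamma$ in place of $F$. This yields \eqref{FTimes}.

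For \eqref{FTimes2}, the key structural observation is that $\vectheta\mapsto f_\gamma(X,\phi,\vectheta) = f(\ell_\gamma(X,\phi),\vectheta[\gamma])$ factors through the linear form $\vectheta\mapsto\vectheta[\gamma]$, which descends to a map $\TT^{2g}\to\TT$ since $[\gamma]\in\ZZ^{2g}$. I then expand $y\mapsto f(\ell_\gamma(X,\phi),y)$ as a Fourier series on $(\TT,\nu_q^1)$: in the continuous regime $q=\infty$ this is the absolutely convergent expansion $f(\ell,y)=\sum_{n\in\ZZ}\widehat f_\infty(\ell,n)\,e(ny)$ for smooth $f$, and in the discrete regime $q\in\NN$ it is the inverse finite Fourier transform $f(\ell,k/q)=\sum_{n\in\ZZ_q}\widehat f_q(\ell,n)\,e(nk/q)$ for $k\in\ZZ$; note that on the support of $\nu_q^{2g}$ the quantity $\vectheta[\gamma]$ lies in the image of $\tfrac{1}{q}\ZZ$ in $\TT$, so the finite inversion applies exactly where needed. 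Substituting and interchanging the (finite or absolutely convergent) sum with the integral gives
$$\widehat f_{\gamma,q}(X,\phi,\trans\vecm) = \sum_{n\in\ZZ_q}\widehat f_q(\ell_\gamma(X,\phi),n)\int_{\TT^{2g}} e\bigl(\vectheta(n[\gamma]-\trans\vecm)\bigr)\, d\nu_q^{2g}(\vectheta).$$
Orthogonality of characters on $\TT^{2g}$ (respectively on the self-dual finite group $(\ZZ/q\ZZ)^{2g}$) evaluates the remaining integral to $\one(n[\gamma]\equiv\trans\vecm\bmod q\trans\ZZ^{2g})$, which is exactly \eqref{FTimes2}.

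I do not anticipate any real obstacle in this argument. The only mildly delicate point is the uniform handling of the continuous and discrete regimes, which amounts to invoking Pontryagin duality on $\TT^{2g}$ with dual $\ZZ^{2g}$ on the one hand, and on the self-dual finite abelian group $(\ZZ/q\ZZ)^{2g}$ on the other; in both cases the orthogonality relations for characters produce the same indicator function, so a single formula encompasses all values $q\in\NN\cup\{\infty\}$.
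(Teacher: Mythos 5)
Your proof is correct and takes essentially the same route as the paper: unfold the sum defining $F_\gamma$, apply \eqref{FTstar} to each summand to get \eqref{FTimes}, then Fourier-expand $f$ in the holonomy variable and use orthogonality of characters on $(\TT^{2g},\nu_q^{2g})$ to obtain \eqref{FTimes2}. The extra remarks you add (local finiteness of the coset sum to justify Fubini, and the observation that $\vectheta[\gamma]$ lands in $\tfrac1q\ZZ/\ZZ$ on $\supp\nu_q^{2g}$ so the discrete inversion applies exactly where needed) are sound and simply make explicit steps the paper leaves implicit.
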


\begin{proof}
In view of \eqref{sixfive}, we have
\begin{equation}\label{sixfive222}
\widehat F_q(X,\phi,\trans\vecm) = 
\sum_{\psi\in\mcg/\Stab_\gamma} \int_{\TT^{2g}}  f_\gamma(\psi\cdot(X,\phi,\vectheta))
\, e(-\vectheta \trans\vecm)\,d\nu_q^{2g}(\vectheta) .
\end{equation}
Now \eqref{FTstar} yields
\begin{equation}
\int_{\TT^{2g}}  f_\gamma(\psi\cdot(X,\phi,\vectheta))
\, e(-\vectheta \trans\vecm)\,d\nu_q^{2g}(\vectheta) = \widehat f_{\gamma,q}(\psi\star(X,\phi,\trans\vecm)),
\end{equation}
where
\begin{equation} \label{FTimes2221222}
\widehat f_{\gamma,q}(X,\phi,\trans\vecm) = 
\int_{\TT^{2g}}  f_\gamma(X,\phi,\vectheta)
\, e(-\vectheta \trans\vecm)\,d\nu_q^{2g}(\vectheta)
\end{equation}
is the fibre Fourier transform of $f_\gamma$. This establishes \eqref{FTimes}. 

Next, we expand the length-holonomy function in terms of the Fourier series of the test function $f$,
\begin{equation} \label{basic001}
f_\gamma(X,\phi,\vectheta) = f(\ell_\gamma(X,\phi) ,\vectheta\, [\gamma]) 
= \sum_{n\in\ZZ_q} 
\widehat f_q(\ell_\gamma(X,\phi) , n)\; e(n \vectheta\, [\gamma]) .
\end{equation}
Formula \eqref{FTimes2} then follows by inserting this expression in \eqref{FTimes2221222}
and using the identity
\begin{equation}
\int_{\TT^{2g}}  e(\vectheta \trans\vecs)\,d\nu_q^{2g}(\vectheta) =
\one(\vecs = \vecnull \bmod q\ZZ^{2g})
\end{equation}
with $\trans\vecs=n[\gamma]-\trans\vecm$.
\end{proof}

In the special case of the zeroth Fourier coefficient, the formula reads
\begin{equation}\label{Fsupsum}
  \breve F_{\gamma,q}(X,\phi)
  = \sum_{\psi\in\mcg/\Stab_\gamma}  \breve f_{\gamma,q}(\ell_\gamma(\psi\cdot(X,\phi)) 
\end{equation}
where 
\begin{equation}
  \label{Fsupsum2}
  \breve f_{\gamma,q}(x) = \sum_{n\in\ker_q[\gamma]} \widehat f_q(x , n),
  \qquad
  \ker_q[\gamma] = \{ n\in\ZZ_q : n [\gamma] = \trans \vecnull \bmod q\trans\ZZ^{2g} \}.
\end{equation}
In particular, for $q=\infty$, we have
$$
\ker_\infty[\gamma] = 
\begin{cases}
\{0\} & ([\gamma]\neq\trans\vecnull)\\
\ZZ  & ([\gamma]=\trans\vecnull) 
\end{cases}
$$
and
$$
\breve f_{\gamma,\infty}(x) =
\begin{cases}
\displaystyle\int_\TT f(x,y) \, d\nu_\infty^1(y) & ([\gamma]\neq\trans\vecnull)\\[10pt]
f(x,0)  & ([\gamma]=\trans\vecnull) .
\end{cases}
$$

Note that although the holonomy $[\psi(\gamma)]$ depends on the choice of $\psi\in\mcg$, the
condition $n[\psi(\gamma)]= \trans \vecnull \bmod q \trans\ZZ^{2g}$ is independent of that choice, since
the integer lattice $q \trans\mathbb{Z}^{2g}$ is stabilised by $\Sp(2g,\ZZ)$. Hence
$\ker_q[\gamma]=\ker_q[\psi(\gamma)]$ for $\psi\in\mcg$.  We conclude that
$\breve F_{\gamma,q}(X,\phi)$ is $\mcg$-invariant and therefore a well-defined function on the
moduli space $\scrM_g$. In fact, $\breve F_{\gamma,q}(X,\phi)$ belongs to the family of ``geometric
functions'' studied by Mirzakhani when $\gamma$ is simple \cite{Mirzakhani07,Mirzakhani13}, and by
Anantharaman and the second author for general loops \cite{AnantharamanMonk}.

We extend the above discussion to multi-loops $\vecgamma=(\gamma_1,\ldots,\gamma_k)$, with each
$\gamma_j$ a closed non-contractible oriented loop on the base surface $S_g$. We set
$\vecell_\vecgamma=(\ell_{\gamma_1},\ldots,\ell_{\gamma_k}) \in \mathbb{R}_{>0}^k$ and
\begin{equation}
\Stab_\vecgamma = \{\psi\in\mcg : \psi(\gamma_j) = \gamma_j \;\forall j=1,\ldots,k\}.
\end{equation}
In the following, $[\vecgamma]=([\gamma_1],\ldots,[\gamma_k])\in H_1(S_g;\mathbb{Z})^k$ is
represented as a $(2g\times k)$ matrix.  For a compactly supported smooth function
$f:\RR^k\times\TT^k\to\CC$ and $(X,\phi,\vectheta) \in \scrT_{g} \times \mathbb{T}^{2g}$, define
\begin{equation}\label{under}
f_\vecgamma(X,\phi,\vectheta) = f(\vecell_\vecgamma(X,\phi) ,\vectheta\, [\vecgamma]) 
\end{equation}
and
\begin{equation} \label{Fsum}
F_\vecgamma(X,\phi,\vectheta)  = \sum_{\psi\in\mcg/\Stab_\vecgamma} f_\vecgamma(\psi\cdot(X,\phi,\vectheta)) .
\end{equation}
We have the same $\mcg$-invariance as above, i.e.,
$F_\vecgamma(\psi\cdot(X,\phi,\vectheta)) = F_\vecgamma(X,\phi,\vectheta)$ for all $\psi\in\mcg$,
and can therefore view $F_\vecgamma$ as a function on $\widetilde\scrM_g$. Its fiber Fourier
transform is given in the following proposition.

\begin{prop}\label{prop4}
For $\vecgamma$ a multi-loop as above, $q\leq\infty$, $\vecm\in\ZZ_q^{2g}$ and $f$ smooth with compact support, we have
\begin{equation}\label{multiFT}
\widehat F_{\vecgamma,q}(X,\phi,\trans\vecm)  = \sum_{\psi\in\mcg/\Stab_\vecgamma} \widehat f_{\vecgamma,q}(\psi\star(X,\phi,\trans\vecm)), 
\end{equation}
where the fiber Fourier transform $\widehat f_{\vecgamma,q}$ of $f_{\vecgamma}$ is equal to
\begin{equation} \label{multiFT2}
\widehat f_{\vecgamma,q}(X,\phi,\trans\vecm) = 
\sum_{\vecn\in\ZZ_q^k} 
\widehat f_q(\vecell_\vecgamma(X,\phi) , \vecn) \;
\one(\trans\vecm = [\vecgamma]\trans\vecn\bmod q\trans\ZZ^{2g})
\end{equation}
with, for $(\vecx,\vecn) \in \mathbb{R}^k \times \mathbb{Z}_q^k$,
\begin{equation}
\widehat f_q(\vecx, \vecn) = 
\int_{\TT^k} f(\vecx,\vecy) \, e(-\vecn\trans\vecy) \, d\nu_q^k(\vecy) .
\end{equation}
\end{prop}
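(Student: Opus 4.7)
The proof plan is to follow \emph{mutatis mutandis} the argument used for Proposition~\ref{prop003}, upgrading from a single loop to the multi-loop setting by promoting scalar Fourier indices to vectors in $\ZZ_q^k$. No new ingredient is needed; the obstacle is merely notational bookkeeping, in particular tracking that the holonomy matrix $[\vecgamma]$ now has size $2g\times k$ and acts on column vectors $\trans\vecn\in\trans\ZZ_q^k$ from the right.

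First, I would establish \eqref{multiFT} exactly as in \eqref{sixfive222}: plug \eqref{Fsum} into the definition of the fiber Fourier transform of $F_\vecgamma$, exchange the summation over $\psi\in\mcg/\Stab_\vecgamma$ with the integral over $\TT^{2g}$, and apply the identity \eqref{FTstar} which identifies
\begin{equation*}
\int_{\TT^{2g}} f_\vecgamma(\psi\cdot(X,\phi,\vectheta))\, e(-\vectheta\trans\vecm)\, d\nu_q^{2g}(\vectheta) = \widehat f_{\vecgamma,q}(\psi\star(X,\phi,\trans\vecm)).
\end{equation*}
This identity holds for the same reason as in the single-loop case, since the $\mcg$-action on $\vectheta$ in \eqref{2.9} and the dual $\star$-action on $\trans\vecm$ were defined precisely to be adjoint with respect to the pairing $\vectheta\trans\vecm$, and the measure $\nu_q^{2g}$ is $\Sp(2g,\ZZ)$-invariant.

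Second, to establish \eqref{multiFT2}, I would expand $f:\RR^k\times\TT^k\to\CC$ as a (discrete or continuous, depending on $q$) Fourier series in the torus variable:
\begin{equation*}
f(\vecx,\vecy) = \sum_{\vecn\in\ZZ_q^k} \widehat f_q(\vecx,\vecn)\, e(\vecn\trans\vecy),
\end{equation*}
and specialise to $\vecx=\vecell_\vecgamma(X,\phi)$, $\vecy=\vectheta[\vecgamma]\in\TT^k$. The key computation is the identity
\begin{equation*}
\vecn \trans(\vectheta[\vecgamma]) = \vectheta\, [\vecgamma]\trans\vecn,
\end{equation*}
which is just the associativity of matrix multiplication applied to the $1\times 2g$, $2g\times k$, $k\times 1$ factors. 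Substituting into \eqref{FTimes2221222} (written in the multi-loop version) and swapping sum and integral gives
\begin{equation*}
\widehat f_{\vecgamma,q}(X,\phi,\trans\vecm) = \sum_{\vecn\in\ZZ_q^k} \widehat f_q(\vecell_\vecgamma(X,\phi),\vecn) \int_{\TT^{2g}} e\bigl(\vectheta\,([\vecgamma]\trans\vecn-\trans\vecm)\bigr)\, d\nu_q^{2g}(\vectheta).
\end{equation*}

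Third and finally, I would invoke the orthogonality relation
\begin{equation*}
\int_{\TT^{2g}} e(\vectheta\trans\vecs)\, d\nu_q^{2g}(\vectheta) = \one(\vecs \equiv \vecnull \bmod q\ZZ^{2g}),
\end{equation*}
already used in the proof of Proposition~\ref{prop003}, applied with $\trans\vecs = [\vecgamma]\trans\vecn-\trans\vecm$. This collapses the inner integral to the indicator $\one(\trans\vecm = [\vecgamma]\trans\vecn \bmod q\trans\ZZ^{2g})$ and yields \eqref{multiFT2}. The whole argument is a verbatim generalisation of Proposition~\ref{prop003} with $k=1$, so I do not expect any genuine obstacle; the only point requiring a moment of care is to verify that the indicator condition is well-defined on the quotient $\ZZ_q^k$, which holds because $q\trans\ZZ^{2g}$ is closed under the action.
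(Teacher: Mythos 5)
Your proposal is correct and matches the paper's approach exactly: the paper's own proof of this proposition is the one-line remark ``This is analogous to the proof of Proposition~\ref{prop003},'' and you have simply carried out that analogy explicitly, including the necessary matrix bookkeeping $\vecn\,\trans{(\vectheta[\vecgamma])}=\vectheta\,[\vecgamma]\trans\vecn$. No discrepancies.
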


\begin{proof}
This is analogous to the proof of Proposition \ref{prop003}.
\end{proof}

The zeroth Fourier coefficient
$\breve F_{\vecgamma,q}(X,\phi)=\widehat F_{\vecgamma,q}(X,\phi,\trans\vecnull)$ is again $\mcg$-invariant
and thus well-defined on $\scrM_g$. Specifying $\vecm = \vecnull$ in \eqref{multiFT}, \eqref{multiFT2} yields formulas similar to
\eqref{Fsupsum} and \eqref{Fsupsum2} in the case of multi-loops.

\section{Twisted Laplacians}\label{secTwisted}

We will now explain how the data $(X,\phi,\vectheta)$ is related to the Fuchsian group $\Gamma$ and its character $\chi_\vectheta$ used in the introduction to define the twisted Laplacian.

Given $\vectheta\in\RR^{2g}$, we define the unitary character
$\chi_\vectheta: \pi_1(S_g) \to\CC^\times$ by
\begin{equation}\label{keychar}
 \chi_\vectheta([\gamma]) = e(\vectheta\, [\gamma]),
\end{equation}
where $[\gamma] \in H_1(S_g;\mathbb{Z})$ is represented as a column vector written in the basis
$a_1,\ldots,a_g$, $b_1,\ldots,b_g$ of $H_1(S_g;\mathbb{Z})$.

Let $X = \Gamma(X) \backslash \HH$ be a compact hyperbolic surface and $\phi : S_g \rightarrow X$ a
marking. Let us pick a (fixed) basepoint $z_0$ on our base surface $S_g$. We use $\phi(z_0)$ as a
basepoint to define the fundamental group $\pi_1(X)$, and choose a lift $\tilde{z}$ of $\phi(z_0)$
in $\HH$. For any $\eta \in \Gamma(X)$, the geodesic arc from $\tilde z$ to $\eta(\tilde z)$ on
$\HH$ projects onto a geodesic loop on $X$, which we denote as $\eta_{z_0} \in \pi_1(X)$. Then, taking
the preimage by the marking naturally induces a group isomorphism
\begin{equation}
  \label{eq:phi_pi}
  \begin{split}
    \iota_{(X,\phi)} : \Gamma(X) & \rightarrow \pi_1(S_g) \\
    \eta & \mapsto \phi^{-1}(\eta_{z_0}).
  \end{split}
\end{equation}
We can now define the unitary character $\chi_{(X, \phi, \vectheta)}: \Gamma(X) \to\CC^\times$ by
\begin{equation}
  \chi_{(X, \phi, \vectheta)}(\eta)= \chi_\vectheta ([\iota_{(X,\phi)}(\eta)])
  = e(\vectheta [\phi^{-1}(\eta_{z_0})]).
\end{equation}
Note that, because the homology class is invariant by change of basepoint and homotopy,
$\chi_{(X,\phi,\vectheta)}$ is a well-defined function of
$(X,\phi,\vectheta) \in \widetilde{\mathcal{T}}_g$, and independent from our choice of basepoint
$z_0$. Furthermore, for $\psi\in\mcg$, we have
 \begin{equation}
    \chi_{\psi\cdot(X, \phi, \vectheta)}(\eta) 
    = e(\vectheta M_{\psi}^{-1} \, [\psi \circ \phi^{-1}(\eta_{z_0})])
    = \chi_{(X, \phi, \vectheta)}(\eta)
  \end{equation}
 and,  for $\vecm\in\ZZ^{2g}$, 
 \begin{equation}
    \chi_{(X, \phi, \vectheta+\vecm)}(\eta) = \chi_{(X, \phi, \vectheta)}(\eta) .
  \end{equation}
Therefore the character family $\chi=\chi_{(X, \phi, \vectheta)}$ is invariant under $\Lambda_g(\ZZ)$ and hence parametrised by the extended moduli space $\widetilde\scrM_g$. This in turn yields a $\widetilde\scrM_g$-family of twisted Laplacians $\Delta_{(X, \phi, \vectheta)}=\Delta_\chi$.

We will consider $\Delta_{(X, \phi, \vectheta)}$ as random operators by drawing $(X, \phi, \vectheta)$ with respect to the probability measures $\mu_{g,q}$ defined in \eqref{mugq}. For $q=1$, the character is trivial, and for $q=2$ real-valued. The following proposition states that for $q\geq 3$ the characters are real-valued with vanishing probability in the limit of large genus.

\begin{prop}
For every $(X,\phi)\in\scrT_g$, we have
\begin{equation}
\nu_q^{2g}\big\{ \vectheta\in\TT^{2g} : \chi_{(X,\phi,\vectheta)}^2 =\id  \big\} = 
\begin{cases}
0 & (q=\infty) \\
1/q^{2g} & (q\in2\NN+1) \\
(2/q)^{2g} & (q\in2\NN) .
\end{cases}
\end{equation}
\end{prop}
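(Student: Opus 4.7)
The plan is to first translate the condition $\chi_{(X,\phi,\vectheta)}^2=\id$ into an explicit arithmetic condition on $\vectheta\in\TT^{2g}$, and then compute the $\nu_q^{2g}$-measure of the resulting subset by elementary counting.

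First, I would unfold the definition. By \eqref{keychar} and the definition of $\chi_{(X,\phi,\vectheta)}$,
\begin{equation*}
\chi_{(X,\phi,\vectheta)}(\eta)^2 = e\big(2\vectheta\,[\phi^{-1}(\eta_{z_0})]\big).
\end{equation*}
Since $\iota_{(X,\phi)}:\Gamma(X)\to\pi_1(S_g)$ is a group isomorphism and the abelianisation map $\pi_1(S_g)\to H_1(S_g;\ZZ)$ is surjective, the column vectors $[\phi^{-1}(\eta_{z_0})]$ run over all of $H_1(S_g;\ZZ)=\ZZ^{2g}$ as $\eta$ ranges over $\Gamma(X)$. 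Hence $\chi_{(X,\phi,\vectheta)}^2=\id$ if and only if $2\vectheta\,\trans\vecv\in\ZZ$ for every $\vecv\in\ZZ^{2g}$, which is equivalent (by taking $\vecv$ to be each standard basis vector) to $2\vectheta\in\ZZ^{2g}$. Equivalently, viewed modulo $\ZZ^{2g}$,
\begin{equation*}
\big\{ \vectheta\in\TT^{2g} : \chi_{(X,\phi,\vectheta)}^2=\id \big\} = \big\{0,\tfrac12\big\}^{2g} \subset \TT^{2g}.
\end{equation*}
Note in particular that this set does not depend on $(X,\phi)$, which is the key reason the statement is uniform over Teichm\"uller space.

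It then only remains to evaluate $\nu_q^{2g}$ on this set, using the explicit description in \eqref{nudefq}. For $q=\infty$, the set is finite and hence has Lebesgue measure zero. For $q\in\NN$, the measure $\nu_q^{2g}$ is supported on $(q^{-1}\ZZ_q)^{2g}$ and gives weight $q^{-2g}$ to each atom. If $q$ is odd, then $\tfrac12\notin q^{-1}\ZZ_q$, so the intersection of $\{0,\tfrac12\}^{2g}$ with the support is the single point $\vecnull$, yielding mass $1/q^{2g}$. If $q$ is even, then $\tfrac12=(q/2)/q\in q^{-1}\ZZ_q$, so the entire set $\{0,\tfrac12\}^{2g}$ of $2^{2g}$ points lies in the support, giving total mass $(2/q)^{2g}$. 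This matches the three cases in the statement.

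The only nontrivial step is the first one, namely identifying the character-squaring condition with the pointwise condition $2\vectheta\in\ZZ^{2g}$; the rest is a direct calculation from the definition of $\nu_q^{2g}$. There is no genuine obstacle here, as the $\mcg$-invariance and well-definedness of $\chi_{(X,\phi,\vectheta)}$ established earlier in Section~\ref{secTwisted} ensure that the set in question is genuinely a subset of $\TT^{2g}$ and depends only on $\vectheta$.
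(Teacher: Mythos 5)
Your proof is correct and takes essentially the same approach as the paper's: the paper simply states the observation that $\chi_{(X,\phi,\vectheta)}^2=\id$ iff $\vectheta\in\{0,\tfrac12\}^{2g}\bmod\ZZ^{2g}$ and calls the rest straightforward, while you supply the (easy) justification via surjectivity of the abelianisation map and then do the same elementary counting.
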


\begin{proof}
This follows straightforwardly from the observation that $\chi_{(X,\phi,\vectheta)}^2 =\id$ if and only if $\vectheta \in \{0,1/2\}^{2g}\bmod\ZZ^{2g}$. 
\end{proof}

This observation is consistent with Theorem \ref{thm1} which shows that, although the random ensemble with respect to the measure $\mu_{g,q}$ ($q\geq 3$) includes twisted Laplacians with GOE statistics, these do not spoil the GUE statistics of the full ensemble. 

\section{Main Lemmas}\label{secMain}

We now state the main lemmas required for the proof of Theorem \ref{thm1}. Recall that non-contractible oriented loops on $S_g$ are identified with oriented closed geodesics on $X$, which in turn correspond to conjugacy classes of non-trivial group elements; primitivity is preserved under these identifications.

\begin{lem}\label{lem1A}
For $f:\RR\to\CC$ smooth with compact support, $\ell_\gamma=\ell_\gamma(X,\phi)$,
\begin{equation} \label{lem1Aeq}
      \EE_{g,q} \Bigg[ \sum_{n=1}^{\infty} \sum_{\substack{\gamma\\ \mathrm{prim.}}} 
      \frac{\ell_\gamma\, f(n \ell_\gamma)\, e(\vectheta [\gamma] n)}{2 \sinh(n \ell_\gamma/2)} \Bigg]
      = 
      \begin{cases}
      \displaystyle \EE_g^{\mathrm{WP}} \bigg[\sum_{n=1}^{\infty} \sum_{\substack{\gamma\; \mathrm{prim.}\\ [\gamma]=\trans\vecnull}}
      \frac{\ell_\gamma f(n \ell_\gamma)}{2 \sinh(n \ell_\gamma/2)} \bigg] & (q=\infty) \\
      \displaystyle \EE_g^{\mathrm{WP}} \bigg[\sum_{n=1}^{\infty} \sum_{\substack{\gamma\\ \mathrm{prim.}}}
      \frac{\ell_\gamma f(n q^*_\gamma \ell_\gamma)}{2 \sinh(n q^*_\gamma \ell_\gamma/2)} \bigg] & (q\in\NN) 
      \end{cases}
  \end{equation}
 where each sum is over non-contractible oriented primitive loops and $q^*_\gamma=q/\gcd([\gamma],q)$.
\end{lem}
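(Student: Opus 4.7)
The strategy is to recognize each summand on the LHS as the pullback of a length-holonomy function in the sense of Section \ref{secLength}, apply Proposition \ref{WPint} termwise, and read off the two cases from a direct fiber Fourier computation. For each $n \geq 1$, set
\[ G_n(x, y) = \frac{x\, f(nx)}{2\sinh(nx/2)}\, e(ny), \qquad (x,y) \in \RR \times \TT. \]
Each $G_n$ is smooth on $\RR \times \TT$ (the apparent pole at $x = 0$ is removable) and compactly supported in $x$. The summand on the LHS is precisely $G_n(\ell_\gamma(X, \phi), \vectheta\,[\gamma])$, which matches the template \eqref{basic}. Let $\mathcal{P}$ denote the set of primitive oriented free homotopy classes on $S_g$; via the marking, $\mathcal{P}$ is in bijection with primitive oriented closed geodesics on $X$. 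Grouping $\mathcal{P}$ by $\mcg$-orbits and introducing the orbit-sum $F_{\gamma, n}$ from Section \ref{secLength} applied to $G_n$, the LHS integrand rewrites as
\[ S(X,\phi,\vectheta)=\sum_{n \geq 1} \sum_{[\gamma] \in \mathcal{P}/\mcg} F_{\gamma, n}(X, \phi, \vectheta), \]
each $F_{\gamma, n}$ being $\mcg$-invariant and hence a function on $\widetilde\scrM_g$.

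Applying Proposition \ref{WPint} to each $F_{\gamma, n}$ gives $\EE_{g,q}[F_{\gamma, n}] = \EE_g^{\mathrm{WP}}[\breve F_{\gamma, n, q}]$, and \eqref{Fsupsum}--\eqref{Fsupsum2} yield
\[ \breve F_{\gamma, n, q}(X, \phi) = \sum_{\psi \in \mcg/\Stab_\gamma} \breve f_{\gamma, n, q}\bigl(\ell_\gamma(\psi\cdot(X,\phi))\bigr), \qquad \breve f_{\gamma, n, q}(x) = \sum_{k \in \ker_q[\gamma]} \widehat G_{n, q}(x, k). \]
A direct evaluation, using $\int_\TT e((n - k) y)\, d\nu_q^1(y) = \one(n \equiv k \bmod q)$ (with the convention that $n \equiv k \bmod \infty$ means $n = k$), produces $\widehat G_{n, q}(x, k) = \frac{x f(nx)}{2\sinh(nx/2)}\, \one(k \equiv n \bmod q)$, and therefore
\[ \breve f_{\gamma, n, q}(x) = \frac{x\, f(nx)}{2\sinh(nx/2)} \, \one\bigl(n\,[\gamma] \equiv \trans\vecnull \bmod q\trans\ZZ^{2g}\bigr). \]

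It remains to simplify the indicator and reassemble. For $q = \infty$ with $n \geq 1$, it holds exactly when $[\gamma] = \trans\vecnull$. For $q \in \NN$, a prime-by-prime valuation check shows that $n[\gamma] \equiv \trans\vecnull \bmod q\trans\ZZ^{2g}$ is equivalent to $q^*_\gamma \mid n$ with $q^*_\gamma = q/\gcd([\gamma], q)$ (the gcd of $q$ with every entry of $[\gamma]$); this divisor is $\mcg$-invariant, as stressed just after \eqref{Fsupsum2}. Unfolding $\sum_{[\gamma] \in \mathcal{P}/\mcg} \sum_{\psi \in \mcg/\Stab_\gamma}$ back into $\sum_{\gamma' \in \mathcal{P}}$ (using $\ell_\gamma(\psi\cdot(X,\phi)) = \ell_{\psi(\gamma)}(X,\phi)$), and identifying $\mathcal{P}$ with primitive oriented closed geodesics on $X$, delivers the $q = \infty$ formula directly and, after the reindexing $n \mapsto q^*_\gamma n$ in the inner sum, the $q \in \NN$ formula.

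The one technical point is the Fubini-type justification for interchanging $\EE_{g, q}$ with the double summation, both on the LHS and in the termwise application of Proposition \ref{WPint}. This follows from absolute convergence: the compact support of $f$ forces $n \ell_\gamma$ to lie in a fixed compact set, so for each $(X, \phi)$ only finitely many $(n, \gamma)$ contribute, and $\mu_{g,q}$-integrability of the resulting finite sum reduces to standard Mirzakhani-type length-spectrum estimates over moduli space. Beyond this check, the argument is pure bookkeeping.
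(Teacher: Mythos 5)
Your proof is correct and follows essentially the same route as the paper: decompose the sum into $\mcg$-orbits, recognize each orbit-sum as a length-holonomy function in the sense of Section~\ref{secLength}, reduce via Proposition~\ref{WPint} to a Weil--Petersson average of the zeroth fiber Fourier coefficient, and evaluate that coefficient using \eqref{Fsupsum}--\eqref{Fsupsum2}. The only cosmetic difference is that you keep the $n$-sum outside (working with $G_n$ for each $n$) where the paper folds it into a single function $\tilde f(x,\theta)=\sum_n \frac{x f(nx)e(n\theta)}{2\sinh(nx/2)}$; your bookkeeping is somewhat more explicit, including the prime-by-prime reduction of the congruence $n[\gamma]\equiv\trans\vecnull\bmod q\trans\ZZ^{2g}$ to $q^*_\gamma\mid n$ and the absolute-convergence justification, neither of which the paper spells out.
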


In analogy with the comment after equation \eqref{Fsupsum2}, we note that
$q^*_{\psi(\gamma)}=q^*_\gamma$ for every element $\psi$ of $\mcg$. This follows directly from the
fact that $M_\psi\in\Sp(2g,\ZZ)$ and therefore $\gcd(M_\psi [\gamma])=\gcd([\gamma])$.

Furthermore, we note that for $\gamma$ a non-contractible \emph{simple} loop, we have $[\gamma]=\trans\vecnull$ if and only if $\gamma$ is separating.

\begin{lem}\label{lem2A}
For $f:\RR\to\CC$ smooth  with compact support, $\ell_\gamma=\ell_\gamma(X,\phi)$,
\begin{multline}  \label{lem2Aeq}
      \EE_{g,q} \Bigg[ \bigg| \sum_{n=1}^{\infty} \sum_{\substack{\gamma\\ \mathrm{prim.}}} 
      \frac{\ell_\gamma\, f(n \ell_\gamma)\, e(\vectheta [\gamma] n)}{2 \sinh(n \ell_\gamma/2)}  \bigg|^2 \Bigg]
      \\ =
      \displaystyle   \EE_g^{\mathrm{WP}} \Bigg[ \sum_{n_1,n_2=1}^{\infty}
      \sum_{\substack{\gamma_1,\gamma_2 \;\mathrm{prim.}\\ n_1[\gamma_1]=n_2[\gamma_2] \\ \bmod q
          \trans \ZZ^{2g} }} 
      \frac{\ell_{\gamma_1} f(n_1  \ell_{\gamma_1})}{2 \sinh(n_1 \ell_{\gamma_1}/2)} \; \frac{\ell_{\gamma_2}\overline f(n_2 \ell_{\gamma_2})}{2 \sinh(n_2 \ell_{\gamma_2}/2)} \Bigg] ,
  \end{multline}
where each sum is over non-contractible oriented primitive loops; remove ``$\bmod q \trans\ZZ^{2g}$'' if $q=\infty$.
\end{lem}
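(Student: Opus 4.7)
The plan is to mirror the proof of Lemma~\ref{lem1A}: I would expand $|S|^2$ as a double sum, observe that the integrand is $\mcg$-invariant and hence descends to $\widetilde{\scrM}_g$, invoke Proposition~\ref{WPint} to separate the fibre integral from the Weil--Petersson integral, and evaluate the fibre integral using orthogonality of characters on $\TT^{2g}$.

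First I would write
\begin{equation*}
S(X,\phi,\vectheta) = \sum_{n\geq 1}\sum_{\gamma\ \mathrm{prim.}} \frac{\ell_\gamma\, f(n\ell_\gamma)}{2\sinh(n\ell_\gamma/2)}\, e(\vectheta[\gamma]n)
\end{equation*}
and expand
\begin{equation*}
|S|^2 = \sum_{n_1,n_2\geq 1}\sum_{\gamma_1,\gamma_2\ \mathrm{prim.}} \frac{\ell_{\gamma_1}\,\ell_{\gamma_2}\, f(n_1\ell_{\gamma_1})\, \overline{f(n_2\ell_{\gamma_2})}}{4\sinh(n_1\ell_{\gamma_1}/2)\sinh(n_2\ell_{\gamma_2}/2)}\, e\bigl(\vectheta(n_1[\gamma_1]-n_2[\gamma_2])\bigr).
\end{equation*}
The double sum over free homotopy classes of primitive oriented loops on $S_g$ is $\mcg$-invariant, since $\psi\in\mcg$ merely relabels loops via $\gamma\mapsto\psi^{-1}(\gamma)$, and the compensating twist $\vectheta\mapsto\vectheta M_\psi^{-1}$ is absorbed by the identity $[\psi^{-1}(\gamma)]=M_\psi^{-1}[\gamma]$. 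The integrand thus defines a function $F$ on $\widetilde{\scrM}_g$, and Proposition~\ref{WPint} yields $\EE_{g,q}[|S|^2]=\EE_g^{\mathrm{WP}}[\breve F_q]$.

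Next I would compute $\breve F_q$, noting that only the factor $e(\vectheta(n_1[\gamma_1]-n_2[\gamma_2]))$ depends on $\vectheta$. Using the orthogonality identity already invoked in the proof of Proposition~\ref{prop003},
\begin{equation*}
\int_{\TT^{2g}} e\bigl(\vectheta(n_1[\gamma_1]-n_2[\gamma_2])\bigr)\, d\nu_q^{2g}(\vectheta) = \one\bigl(n_1[\gamma_1]\equiv n_2[\gamma_2]\bmod q\trans\ZZ^{2g}\bigr),
\end{equation*}
which collapses to the strict equality $n_1[\gamma_1]=n_2[\gamma_2]$ when $q=\infty$. Substituting this indicator back into the double sum reproduces the right-hand side of~\eqref{lem2Aeq}.

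The main technical obstacle is the Fubini-type justification for exchanging the infinite double sum with the expectation. At each fixed $(X,\phi)$ only pairs with $n_i\ell_{\gamma_i}$ in the (bounded) support of $f$ contribute, so the sum is pointwise finite; but uniform $L^1(\mu_g^{\mathrm{WP}})$-control across summation indices is still required. I expect this to be handled by the same bounds on Weil--Petersson integrals of prime geodesic counting functions that underpin Lemma~\ref{lem1A}, now applied to the multi-loop $\vecgamma=(\gamma_1,\gamma_2)$ via Mirzakhani's integration formula, within the length-holonomy framework of Section~\ref{secLength}.
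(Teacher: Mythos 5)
Your proof is correct and takes essentially the same route as the paper. The only presentational difference is that the paper first partitions the double sum over pairs $(\gamma_1,\gamma_2)$ into $\mcg$-orbits so as to identify each orbit sum as a length-holonomy function $F_\vecgamma$ (with the specific test function $\tilde f(x_1,x_2,\theta_1,\theta_2)=\sum_{n_1,n_2}\frac{x_1 f(n_1x_1)e(n_1\theta_1)}{2\sinh(n_1x_1/2)}\frac{x_2\overline f(n_2x_2)e(-n_2\theta_2)}{2\sinh(n_2x_2/2)}$) and then invokes Proposition~\ref{WPint} together with the multi-loop Fourier formula of Proposition~\ref{prop4} for $\vecm=\vecnull$; you instead apply Proposition~\ref{WPint} to the full $\mcg$-invariant sum at once and evaluate the fibre average by the same orthogonality identity that underlies Proposition~\ref{prop4} -- the two computations coincide term by term. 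Your closing remark correctly identifies where the paper's orbit decomposition earns its keep: organizing the sum by $\mcg$-orbits is precisely what lets one apply Mirzakhani's integration formula orbitwise and thereby control the interchange of sum and expectation.
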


\begin{proof}
  We divide the sums over $\gamma$ in \eqref{lem1Aeq} and over $\vecgamma=(\gamma_1,\gamma_2)$ in \eqref{lem2Aeq} into orbits under the mapping class
  group, i.e., write them as
  \begin{equation}
    \label{eq:lemA_orbit}
    \EE_{g,q} \Bigg[\sum_{\substack{\vecgamma\bmod\mcg \\ \text{prim.}}} F_\vecgamma(X,\phi,\vectheta)\Bigg]
  \end{equation}
  where the sum runs over representatives $\vecgamma$ of $\mcg$-orbits of $k$ primitive multi-loops,
  with $k=1$ for \eqref{lem1Aeq} and $k=2$ for \eqref{lem2Aeq}, and $F_\vecgamma$ is the
  length-holonomy function associated to the loop $\vecgamma$ and the functions
  \begin{equation}
    \tilde{f}(x,\theta) = \sum_{n=1}^{\infty} \frac{x f(nx) \, e(n\theta)}{2 \sinh (nx/2)} 
  \end{equation}
  for \eqref{lem1Aeq}, and
  \begin{equation}
    \tilde{f}(x_1,x_2,\theta_1,\theta_2)
    = \sum_{n_1,n_2=1}^{\infty} \frac{x_1 f(n_1x_1) \, e(n_1 \theta_1)}{2 \sinh (n_1x_1/2)}
    \frac{x_2 \overline{f}(n_2x_2) \, e(-n_2\theta_2)}{2 \sinh (n_2x_2/2)}
  \end{equation}
  for \eqref{lem2Aeq}.
  By Proposition \ref{WPint}, the $\mu_{g,q}$-average of each $F_\vecgamma$ is equal to the
  $\mu_g^{\mathrm{WP}}$-average of its zeroth Fourier coefficient, i.e. we can rewrite
  \eqref{eq:lemA_orbit} as 
  \begin{equation}
    \EE_g^{\mathrm{WP}}
    \Bigg[\sum_{\substack{\vecgamma\bmod\mcg \\ \text{prim.}}} \breve{F}_\vecgamma(X,\phi)\Bigg].
  \end{equation}
  Both lemmas now follow from Proposition \ref{prop4} applied to the Fourier coefficient $\vecm=\vecnull$.
\end{proof}

\section{Proof of Theorem \ref{thm1}}\label{secProof}

For $q\in\NN$ set
\begin{equation}\label{Ifq}
  I_{f,q}(L,\tau) =
  \frac{4}{L} \int_0^\infty
  \sum_{n=1}^\infty \hat f\left(\frac{n q x}{L}\right) \frac{\sinh^2(x/2)}{\sinh(nqx/2)} \cos(nq\tau x)\, dx,
\end{equation}
and for $q=\infty$, set $I_{f,q}(L,\tau)=0$.

\begin{prop}\label{mainprop1}
Under the assumptions for Theorem \ref{thm1}, we have 
\begin{equation}
\lim_{g\to\infty} \EE_{g,q} \Bigg[ \sum_{n=1}^{\infty} \sum_{\substack{\gamma\\ \mathrm{prim.}}} 
      \frac{\ell_\gamma\, \hat h_{L,\tau}(n \ell_\gamma)\, e(\vectheta [\gamma] n)}{2 \sinh(n \ell_\gamma/2)} \Bigg]
 = I_{f,q}(L,\tau) .
\end{equation}
\end{prop}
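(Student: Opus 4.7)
The strategy is to apply Lemma \ref{lem1A} with the test function $\hat h_{L,\tau}$ (regarded as a function of $\ell$), thereby reducing the $\EE_{g,q}$-average to a Weil--Petersson average over $\scrM_g$ of a sum over primitive oriented loops $\gamma$ on $S_g$. I then decompose this sum into mapping class group orbits. Since $\hat f$ is compactly supported, say in $[-T,T]$, the identity
\begin{equation*}
\hat h_{L,\tau}(u) = \frac{2\hat f(u/L)}{L}\cos(u\tau)
\end{equation*}
shows that $\hat h_{L,\tau}(nq\ell_\gamma)$ vanishes unless $\ell_\gamma\leq LT/(nq)$, so all the sums involved are effectively finite.

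For $q\in\NN$, the dominant contribution comes from the $\mcg$-orbit of a simple non-separating primitive oriented loop $\gamma_0$: its homology class $[\gamma_0]$ is a primitive vector of $\ZZ^{2g}$, hence $\gcd([\gamma_0],q)=1$ and $q^\ast_{\gamma_0}=q$. Applying Mirzakhani's integration formula to this orbit together with the large-genus asymptotic $V_{g-1,2}(x,x)/V_g\to 4\sinh^2(x/2)/x^2$ (and an additional factor of $2$ for the orientation of the loop) gives, in the limit $g\to\infty$, the contribution
\begin{equation*}
\sum_{n=1}^\infty \int_0^\infty \frac{x\,\hat h_{L,\tau}(nqx)}{2\sinh(nqx/2)}\cdot\frac{4\sinh^2(x/2)}{x}\,dx.
\end{equation*}
Substituting the expression above for $\hat h_{L,\tau}$ and interchanging sum and integral yields exactly $I_{f,q}(L,\tau)$.

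It remains to show that the remaining orbits contribute $o(1)$ as $g\to\infty$, namely those coming from (i) simple separating primitive loops, for which $[\gamma]=\trans\vecnull$ so that $q^\ast_\gamma=1$, and (ii) non-simple primitive loops. For (i), Mirzakhani's integration formula expresses each orbit average in terms of $V_{g_1,1}(x)V_{g_2,1}(x)/V_g$ summed over partitions $g=g_1+g_2$ with $g_1,g_2\geq 1$, which vanishes in the limit by standard large-genus Weil--Petersson volume asymptotics. For (ii), I would invoke the estimates of Anantharaman and the second author \cite{AnantharamanMonk} on length-holonomy functions attached to non-simple loops. In the case $q=\infty$, Lemma \ref{lem1A} restricts the sum entirely to loops with $[\gamma]=\trans\vecnull$, which are of type (i) or (ii), so the limit is $0=I_{f,\infty}(L,\tau)$. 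The main obstacle is the uniform-in-$g$ control of (ii): the number of non-simple primitive loops grows exponentially with length, and to beat this one needs sharp Weil--Petersson volume bounds for surfaces cut along such loops. This is precisely the content of the machinery from \cite{AnantharamanMonk}.
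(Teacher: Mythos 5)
Your main-term computation is correct and matches the paper: applying Lemma \ref{lem1A}, decomposing into $\mcg$-orbits, isolating the single orbit of simple non-separating oriented loops (for which $\gcd([\gamma],q)=1$, so $q^*_\gamma=q$), and applying Mirzakhani's integration formula plus the large-genus volume asymptotic $V_{g-1,2}(x,x)/V_g\to 4\sinh^2(x/2)/x^2$ indeed produces $I_{f,q}(L,\tau)$, including your factor of $2$ for orientation. Where you diverge from the paper — and where your plan is heavier than necessary — is in the treatment of the subdominant orbits. The paper avoids re-establishing bounds on separating and non-simple loops by a one-line domination argument: for each fixed $(n,\gamma)$, the term appearing in the $q\geq 2$ case equals the term with index $(nq^*_\gamma,\gamma)$ in the $q=1$ case, because $q^*_\gamma\in\NN$; so the $q\geq 2$ error sum is (term-by-term in absolute value) a subsum of the $q=1$ error sum, which Rudnick already showed vanishes in \cite[\S4.3]{Rudnick}. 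The same reasoning disposes of $q=\infty$, since restricting to $[\gamma]=\trans\vecnull$ again gives a subsum of Rudnick's $q=1$ error terms. Your proposed route — treating separating loops by the $V_{g_1,1}V_{g_2,1}/V_g$ asymptotics and non-simple loops via \cite{AnantharamanMonk} — would eventually work, but the ``obstacle'' you flag for non-simple primitive loops is not actually present once you notice the reindexing trick; moreover \cite{AnantharamanMonk} is not the most direct citation, since the required estimates for the $q=1$ case are already in \cite{Rudnick} and suffice by domination.
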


\begin{proof}
Assume first $q<\infty$. Lemma \ref{lem1A} reduces the problem to the $g\to\infty$ asymptotics of
\begin{equation}
\EE_g^{\mathrm{WP}} \Bigg[ \sum_{n=1}^{\infty} \sum_{\substack{\gamma\\ \mathrm{prim.}}}
      \frac{\ell_\gamma \hat h_{L,\tau}(n q^*_\gamma \ell_\gamma)}{2 \sinh(n q^*_\gamma \ell_\gamma/2)} \Bigg].
\end{equation}
Rudnick's case corresponds to the case $q=1$ (where $q^*_\gamma=1$), and the proof of the above
statement follows from the identical argument. The terms that are shown to vanish as $g\to\infty$
for $q=1$ \cite[§4.3]{Rudnick} dominate the same terms for $q\geq 2$, and therefore also these
vanish. The only surviving contribution comes from non-separating simple oriented loops. These loops
form a single $\mcg$-orbit, the orbit of a fixed loop $\gamma$ which can be chosen such that
$[\gamma]=\trans (1,0,\ldots,0)$. This means that $\gcd([\gamma],q)=1$ and therefore
$q_\gamma^*=q$. The evaluation of
\begin{equation}
  \lim_{g \to \infty}
\EE_g^{\mathrm{WP}} \Bigg[ \sum_{n=1}^{\infty} \sum_{\psi\in\mcg/\Stab_\gamma}
\frac{\ell_{\psi(\gamma)}
  \hat h_{L,\tau}(n q \ell_{\psi(\gamma)})}{2 \sinh(n q \ell_{\psi(\gamma)}/2)} \Bigg]
\end{equation}
follows directly from Mirzakhani's integration formula \cite{Mirzakhani07} and volume asymptotics
\cite{Mirzakhani13}; as done in~\cite[§4.3]{Rudnick}.

The case $q=\infty$ follows in the same way from Lemma \ref{lem1A}: for non-separating simple oriented loops $\gamma$ we have $[\gamma]\neq\trans\vecnull$ and thus all terms in \eqref{lem1Aeq} vanish in the limit $g\to\infty$.
\end{proof}

\begin{prop}\label{mainprop2}
Under the assumptions for Theorem \ref{thm1}, we have 
\begin{multline}\label{main-prop}
\lim_{g\to\infty}  \EE_{g,q} \Bigg[ \bigg| \sum_{n=1}^{\infty} \sum_{\substack{\gamma\\ \mathrm{prim.}}} 
      \frac{\ell_\gamma\, \hat h_{L,\tau}(n \ell_\gamma)\, e(\vectheta [\gamma] n)}{2 \sinh(n \ell_\gamma/2)} \bigg|^2\Bigg]
 \\
 = 
\begin{cases}
\Sigma_{\mathrm{GOE}}^2(f) +|I_{f,q}(L,\tau)|^2 +O(L^{-2}\log L) & (q=1,2)\\
\Sigma_{\mathrm{GUE}}^2(f) +|I_{f,q}(L,\tau)|^2 +O(L^{-2}\log L) & (3\leq q \leq \infty) .
\end{cases}
\end{multline}
\end{prop}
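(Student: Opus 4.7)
The strategy follows the framework of Rudnick's proof in~\cite{Rudnick}, with the mod-$q$ homology constraint of Lemma~\ref{lem2A} providing the mechanism that distinguishes the GOE and GUE regimes. First, I would apply Lemma~\ref{lem2A} to rewrite the left-hand side of~\eqref{main-prop} as a Weil-Petersson expectation
\[
\EE_g^{\mathrm{WP}}\Bigg[\sum_{n_1,n_2=1}^\infty\sum_{\substack{\gamma_1,\gamma_2\ \mathrm{prim.}\\ n_1[\gamma_1]=n_2[\gamma_2]\\ \bmod q\trans\ZZ^{2g}}} \frac{\ell_{\gamma_1}\hat h_{L,\tau}(n_1\ell_{\gamma_1})}{2\sinh(n_1\ell_{\gamma_1}/2)}\cdot\frac{\ell_{\gamma_2}\overline{\hat h_{L,\tau}(n_2\ell_{\gamma_2})}}{2\sinh(n_2\ell_{\gamma_2}/2)}\Bigg].
\]
I then decompose the sum over pairs $(\gamma_1,\gamma_2)$ of primitive conjugacy classes according to the geometric relation of the two loops: (a) the diagonal $\{\gamma_1\}=\{\gamma_2\}$; (b) the anti-diagonal $\{\gamma_1\}=\{\gamma_2^{-1}\}$; (c) pairs realisable as two disjoint simple non-separating loops with non-proportional homology; and (d) all remaining pairs (intersecting, filling, or non-simple).

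For (a), restricting first to $n_1=n_2=n$ (off-diagonal $n_1\neq n_2$ goes into the error term), the summand becomes $|c_{n,\gamma}|^2$ with $c_{n,\gamma}=\ell_\gamma\hat h_{L,\tau}(n\ell_\gamma)/(2\sinh(n\ell_\gamma/2))$. Only the non-separating simple loops contribute at leading order (other geometric types being suppressed by volume asymptotics), and Mirzakhani's integration formula together with the $g\to\infty$ volume asymptotics \cite{Mirzakhani07,Mirzakhani13} yields a single copy of $\int_\RR |x|\,|\hat f(x)|^2\,dx$. For (b), using $[\gamma_2]=-[\gamma_1]$, the homology constraint at $n_1=n_2=n$ reduces to $2n[\gamma]\equiv\vecnull\bmod q\trans\ZZ^{2g}$: this holds automatically for $q\in\{1,2\}$, so (b) contributes a second copy of the same density and produces the factor of $2$ in $\Sigma^2_{\mathrm{GOE}}$; for $q\geq 3$ it fails for generic non-separating $[\gamma]$ (the exceptional loops form a sub-orbit of positive codimension in the combinatorial type and contribute only to the error), so (b) vanishes at leading order and a single copy survives, giving $\Sigma^2_{\mathrm{GUE}}$.

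For (c), the mod-$q$ constraint decouples over the two loops since $n_1[\gamma_1]\equiv\vecnull$ and $n_2[\gamma_2]\equiv\vecnull$ separately (using that $[\gamma_1],[\gamma_2]$ span a rank-two primitive sublattice). Mirzakhani's integration formula applied to the unique $\mcg$-orbit of two-component non-separating disjoint simple multi-loops, together with the factorisation of leading-order Weil-Petersson volumes as $g\to\infty$ established in~\cite{Mirzakhani13}, reduces the expectation to a product of two one-loop averages of exactly the type treated in Proposition~\ref{mainprop1}. Each factor converges to $I_{f,q}(L,\tau)$, so the product reproduces $|I_{f,q}(L,\tau)|^2$.

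The main obstacle is (d): bounding the contribution of pairs that are not covered above by $O(L^{-2}\log L)$ uniformly in $g$. This requires controlling non-simple loops via the bounds on geometric functions of \cite{Mirzakhani07,AnantharamanMonk}, off-diagonal terms $n_1\neq n_2$ via the rapid decay of $\hat f$ and exponential decay of $1/\sinh(x/2)$, and intersecting or "filling" simple configurations via Mirzakhani--Zograf volume estimates. For $q=1$ these estimates are precisely those carried out by Rudnick~\cite[\S 4]{Rudnick}; the additional homology constraint $n_1[\gamma_1]\equiv n_2[\gamma_2]\bmod q\trans\ZZ^{2g}$ only restricts the index set of the sum, so each bound persists uniformly in $q\in\NN\cup\{\infty\}$. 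Combining (a), (b), (c) with the error bound from (d) yields~\eqref{main-prop}.
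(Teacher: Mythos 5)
Your decomposition into (a) $\gamma_1=\gamma_2$, (b) $\gamma_1=\gamma_2^{-1}$, (c) disjoint simple non-separating pairs, (d) everything else is exactly the paper's (i)--(iii) plus error terms, and the reduction via Lemma~\ref{lem2A}, the appeal to Mirzakhani's integration formula and volume asymptotics, and the deferral of (d) to Rudnick's estimates in~\cite{Rudnick} all match. Your conclusions also match.

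One piece of your reasoning in case (b) is, however, conceptually wrong, even though it leads to the right conclusion. You claim the constraint $2n[\gamma]\equiv\vecnull\bmod q\trans\ZZ^{2g}$ ``fails for generic non-separating $[\gamma]$, the exceptional loops forming a sub-orbit of positive codimension.'' There are no exceptional loops: all simple non-separating oriented loops lie in a single $\mcg$-orbit, and the condition $n_1[\gamma_1]\equiv n_2[\gamma_2]\bmod q\trans\ZZ^{2g}$ is invariant along the $\mcg$-orbit of the pair $(\gamma_1,\gamma_2)$ because $\Sp(2g,\ZZ)$ preserves $q\trans\ZZ^{2g}$ (this is the remark after \eqref{Fsupsum2}). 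So for $(\gamma,\gamma^{-1})$ with $\gcd[\gamma]=1$ the constraint at $n_1=n_2=1$ reduces to $q\mid 2$ for \emph{every} pair in the orbit simultaneously: it holds for all or for none. The correct mechanism is not genericity of the homology class, but that $q\mid 2$ holds iff $q\in\{1,2\}$; for $q\geq 3$ the constraint at $n_1=n_2=1$ fails identically, and the surviving terms $n_1+n_2\geq 3$ are absorbed into the $O(L^{-2}\log L)$ error as in Rudnick's Lemma~5.2. You should also state explicitly that the reduction to $n_1=n_2=1$ in cases (a) and (b) is what Rudnick's Lemma~5.2 gives, rather than just ``restricting to $n_1=n_2$''; the same lemma bounds $n_1+n_2\geq 3$, not merely $n_1\neq n_2$.
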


\begin{proof}
Let us first consider $q<\infty$. By Lemma \ref{lem2A}, we need to find the large $g$ asymptotics of
\begin{equation}\label{sixseven}
\EE_g^{\mathrm{WP}} \Bigg[ \sum_{n_1,n_2=1}^{\infty} \sum_{\substack{\gamma_1,\gamma_2 \;\mathrm{prim.}\\ n_1[\gamma_1]=n_2[\gamma_2] \\ \bmod q \trans\ZZ^{2g} }} 
      \frac{\ell_{\gamma_1} \hat h_{L,\tau}(n_1  \ell_{\gamma_1})}{2 \sinh(n_1 \ell_{\gamma_1}/2)} \; \frac{\ell_{\gamma_2}\overline{\hat h}_{L,\tau}(n_2 \ell_{\gamma_2})}{2 \sinh(n_2 \ell_{\gamma_2}/2)} \Bigg].
\end{equation}
As above, the lower order terms for $q=1$ dominate those for $q\geq 2$, and hence we can refer to
the estimates of \cite[§5.2, §6]{Rudnick}.

The only leading order terms come from pairs $(\gamma_1,\gamma_2)$ of simple non-separating oriented
loops in one of the following three constellations: (i)~$\gamma_1=\gamma_2$, (ii)
$\gamma_1=\gamma_2^{-1}$, or (iii) $\gamma_1\cap \gamma_2=\emptyset$.

In case (i), the loops are represented as a single orbit under $\mcg$ of a simple non-separating
oriented loop $\gamma$ with $[\gamma]= \trans(1,0,\ldots,0)$. The contribution of these terms to \eqref{sixseven} is thus
\begin{equation}\label{sixsevenB}
  \EE_g^{\mathrm{WP}} \Bigg[
  \sum_{\substack{ n_1,n_2=1 \\ n_1=n_2 \bmod q}}^{\infty}
  \sum_{\psi\in\mcg/\Stab_\gamma}  
      \frac{\ell_{\psi(\gamma)}^2 \hat h_{L,\tau}(n_1  \ell_{\psi(\gamma)}) \overline{\hat h}_{L,\tau}(n_2 \ell_{\psi(\gamma)})}{4 \sinh(n_1 \ell_{\psi(\gamma)}/2) \sinh(n_2 \ell_{\psi(\gamma)}/2)} \Bigg].
\end{equation}
Rudnick shows in \cite[Lemma 5.2]{Rudnick} that for $q=1$ terms with $n_1+n_2\geq 3$ contribute at
most $O(L^{-2}\log L)$, and hence the same is true for the contribution of those terms to
\eqref{sixsevenB}. The remaining terms are $n_1=n_2=1$, and is worked out in \cite[Lemma 5.2]{Rudnick}, 
\begin{equation}\label{sixsevenC}
\EE_g^{\mathrm{WP}} \Bigg[ \sum_{\psi\in\mcg/\Stab_\gamma}  
      \frac{\ell_{\psi(\gamma)}^2 |\hat h_{L,\tau}(\ell_{\psi(\gamma)})|^2}{4 \sinh^2(\ell_{\psi(\gamma)}/2)} 
\Bigg] = \int_\RR |x| \, |\hat f(x)|^2 dx +O(L^{-2})     .
\end{equation}
We here recognize the GUE variance
$\Sigma_{\mathrm{GUE}}^2(f) = \int_{\mathbb{R}} |x| |\hat{f}(x)|^2 \, d x$.  [Note here that our
summation is over oriented rather than the non-oriented loops in \cite{Rudnick}, which explains the
differences in the way factors of $2$ are distributed in some intermediate steps of the computation;
obviously yielding the same final results because there is a $2$-to-$1$ correspondance between oriented
and non-oriented loops.]

Case (ii) is analogous, again represented as a single orbit under $\mcg$ of a simple non-separating
oriented loop $\gamma$ with $[\gamma]= \trans(1,0,\ldots,0)$. Since
$\ell_{\psi(\gamma)}=\ell_{\psi(\gamma^{-1})}$, the contribution of these terms is now
\begin{equation}\label{sixsevenD}
  \EE_g^{\mathrm{WP}} \Bigg[
  \sum_{\substack{ n_1,n_2=1 \\ n_1=-n_2 \bmod q}}^{\infty}
  \sum_{\psi\in\mcg/\Stab_\gamma}  
      \frac{\ell_{\psi(\gamma)}^2 \hat h_{L,\tau}(n_1  \ell_{\psi(\gamma)}) \overline{\hat h}_{L,\tau}(n_2 \ell_{\psi(\gamma)})}{4 \sinh(n_1 \ell_{\psi(\gamma)}/2) \sinh(n_2 \ell_{\psi(\gamma)}/2)} \Bigg].
\end{equation}
As before, the $q=1$ case bounds above terms with $n_1+n_2\geq 3$ by $O(L^{-2}\log L)$. This leaves
the case $n_1=n_2=1$, which is compatible with the condition $n_1=-n_2 \bmod q$ only for $q=1$ or
$q=2$. In the latter case \eqref{sixsevenD} becomes \eqref{sixsevenC}, so that the sum of the two
first constellations yields a term $2 \, \Sigma_{\mathrm{GUE}}^2(f) = \Sigma_{\mathrm{GOE}}^2(f)$. We have
thus explained the GOE resp.~GUE term in \eqref{main-prop}.

In the remaining case (iii), the loops are generated by the $\mcg$-orbit of one
multi-loop $\vecgamma=(\gamma_1,\gamma_2)$ with representatives $\gamma_1, \gamma_2$ which we can
pick so that $[\gamma_1] = \trans(1,0,\ldots,0)$ and $[\gamma_2]= \trans(0,1,0,\ldots,0)$. This
means that the condition $n_1[\gamma_1]=n_2[\gamma_2] \bmod q \trans\ZZ^{2g}$ simplifies to
$n_1,n_2=0\bmod q$.  The contribution of these terms is therefore
\begin{equation}\label{sixsevenE}
  \EE_g^{\mathrm{WP}} \Bigg[
  \sum_{n_1,n_2=1}^{\infty}
  \sum_{\psi\in\mcg/\Stab_{\vecgamma}}  
  \frac{\ell_{\psi(\gamma_1)} \ell_{\psi(\gamma_2)}
    \hat h_{L,\tau}(n_1 q \ell_{\psi(\gamma_1)})
    \overline{\hat h}_{L,\tau}(n_2 q\ell_{\psi(\gamma_2)})}
  {4 \sinh(n_1 q \ell_{\psi(\gamma_1)}/2) \sinh(n_2 q \ell_{\psi(\gamma_2)}/2)}\Bigg] .
\end{equation}
The same argument as in \cite[Lemma 5.3]{Rudnick} can be used to show that the terms for $\gamma_1$
and $\gamma_2$ decouple for large genus $g$, i.e., the $g\to\infty$ limit of \eqref{sixsevenE} is
given by $|I_{f,q}(L,\tau)|^2$. 
[Recall once more that our summation is over oriented loops, which explains an extra factor of $4$ compared with \cite{Rudnick}.]

The above proof remains the same for $q=\infty$ by dropping ``mod $q$''; the contribution from case (iii) is trivially zero.
\end{proof}

\begin{proof}[Proof of Theorem \ref{thm1}]
We expand \eqref{Sigmagq} as
\begin{equation}\label{Sigmagq001}
\Sigma_{g,q}^2(L,\tau; f) = \EE_{g,q} \bigg[ \big| \counting^{\mathrm{osc}} \big|^2 \bigg] - \big| \EE_{g,q} \big[ \counting^{\mathrm{osc}}  \big] \big|^2,
\end{equation}
where
\begin{equation}
\counting^{\mathrm{osc}}  = \counting - (g-1) \int_\RR h_{L,\tau}(r) r \tanh(\pi r) \, dr
= \sum_{n=1}^{\infty} \sum_{\substack{\gamma\\ \mathrm{prim.}}} 
      \frac{\ell_\gamma\, \hat h_{L,\tau}(n \ell_\gamma)\, e(\vectheta [\gamma] n)}{2 \sinh(n \ell_\gamma/2)} .
\end{equation}
Here the second equality is Selberg's trace formula \eqref{STF}, where  the sum over $\gamma$
corresponds to primitive oriented closed geodesics. Propositions \ref{mainprop1} and \ref{mainprop2}
thus provide the large genus asymptotics of the expected value and variance of
$\counting^{\mathrm{osc}}$, and Theorem \ref{thm1} follows.
\end{proof}

\section{Dirac operators and GSE statistics}\label{secDirac}

The objective of this final section is to explain how the above construction can be extended to study random Dirac
operators, and in particular to prove GSE-statistics in the case of spin-systems with time-reversal
invariance. We refer the reader to \cite{BolteStiepan,MonkStan} for a more detailed introduction to Dirac operators on hyperbolic surfaces.

The Dirac operator on the hyperbolic plane $\mathbb{H}$ is given as the matrix-valued
operator
\begin{equation}
  \mathrm{D}
  = i
  \begin{pmatrix}
    0 & i y \partial_x + y \partial_y - \frac 12 \\
    - iy \partial_x + y \partial_y - \frac 12 & 0
  \end{pmatrix}
\end{equation}
acting on differentiable functions $\HH \rightarrow \CC^2$. We now consider a discrete co-compact subgroup $\overline\Gamma$ of $\SL(2,\RR)$, which is assumed to contain $-1$. It is the double cover of $\Gamma\subset\PSL(2,\RR)=\SL(2,\RR)/\{\pm 1\}$ used in the setting of Laplacians, with the covering map given by $\eta\mapsto\pm \eta$. Since $\Gamma$ is the fundamental group of a smooth hyperbolic surface, we have that $\overline\Gamma$ is strictly hyperbolic, i.e., the only elements with $|\tr\eta|\leq 2$ are $\eta=\pm 1$. The action of $\eta\in\SL(2,\RR)$ on $\HH\times\S^1$ (with $\S^1$ the unit circle in the complex plane) is defined as
\begin{equation}
(z,\e^{\i t}) \mapsto \left( \frac{az+b}{cz+d}, \frac{cz+d}{|cz+d|}\e^{\i t} \right),
\qquad 
\eta =  \begin{pmatrix}
  a & b \\ c & d 
\end{pmatrix} \in\SL(2,\RR).
\end{equation} 
In particular, $-1$ acts by $(z,\e^{\i t}) \mapsto ( z, -\e^{\i t})$.
For $\eta\in\overline\Gamma$ as above, set
\begin{equation}
j_\eta(z) = \frac{cz+d}{|c z+d|}, \qquad J_\eta(z) =
  \begin{pmatrix}
    j_\eta(z) & 0 \\
    0 & j_\eta(\bar z)
  \end{pmatrix} .
\end{equation}

We fix a unitary character
$\chi : \overline\Gamma \rightarrow \CC^\times$ such that $\chi(-1)=-1$. We consider the action of
$\mathrm{D}$ on the space of differentiable functions $\varphi:\HH \rightarrow \CC^2$ satisfying
\begin{equation}
  \varphi(\eta\cdot z)
  = \chi(\eta)\, J_\eta(z)\, \varphi(z)
  \qquad \text{for all } \eta \in \overline{\Gamma},
\end{equation}
and define the Dirac operator $\mathrm{D}_\chi$ for $X=\Gamma\backslash\HH$ to be the self-adjoint extension of the restriction of $\mathrm{D}$ to this space of automorphic forms.
  
If $\varphi=\begin{pmatrix} \varphi_1 \\ \varphi_2 \end{pmatrix}$ is an eigenform of $\mathrm{D}_\chi$ with eigenvalue $r$, then $\begin{pmatrix} \varphi_1 \\ -\varphi_2 \end{pmatrix}$ is an eigenform with eigenvalue $-r$ \cite[Lemma~1]{BolteStiepan}.
This so-called \emph{chiral symmetry} implies therefore that the spectrum of the operator $\mathrm{D}_\chi$ is symmetric
around~$0$. We denote by $0 \leq r_0\leq r_1\leq \ldots \to\infty $ its non-negative eigenvalues with multiplicity, including half of the eigenvalues that are equal to $0$. 

As in the case of the twisted Laplacian, this counting function can be studied by means of the trace formula \cite[Theorem~1]{BolteStiepan}
\begin{equation}\label{DiracSTF}
   \sum_{j=0}^\infty h(r_j) = (g-1) \int_\RR h(r)\, r \coth(\pi r)\, dr + 
      \sum_{n=1}^{\infty} \sum_{\substack{\{\eta\}\\ \tr\eta>2}} \chi(\eta)^n\,
      \frac{\ell_\eta \hat{h}(n \ell_\eta)}{2 \sinh(n \ell_\eta/2)}
    \end{equation}
    where the sum is, as in the classical Selberg trace formula, over $\overline\Gamma$-conjugacy classes of
    primitive hyperbolic elements, but now with $\overline\Gamma$ considered a subgroup of $\SL(2,\RR)$,
    and the summation restricted to the elements $\eta$ with $\tr\eta>2$. 

    As noted in \cite[Proposition~1]{BolteStiepan}, the sequence of $\lambda_j=r_j^2+\frac14$
    coincides with the eigenvalues of the Maa\ss-Laplacian of weight $1$, hence the above trace
    formula is in fact the same as the classical Selberg trace formula in that setting \cite[Theorem
    4.11]{Hejhal1}.

    Let us now construct random ensembles of Dirac operators
    $\mathrm{D}_{(X,\phi,\vectheta)}=\mathrm{D}_\chi$ with $(X,\phi,\vectheta)$ sampled from
    $\widetilde \scrM_g$ with respect to the probability measure $\mu_{g,q}$ defined in Section
    \ref{secInvariant}.

    Let
    $X = \Gamma(X) \backslash \HH$ be a compact hyperbolic surface and $\phi : S_g \rightarrow X$ a
    marking. We define, as set out above, the group $\overline{\Gamma}(X)$ as the double
    cover of $\Gamma(X)\subset\PSL(2,\RR)$,
    \begin{equation*}
      \overline{\Gamma}(X) = \{\eta \in \SL(2,\RR), \pm \eta \in \Gamma(X)\}.
    \end{equation*}
    We recall that the marking induces a group isomorphism
    $\iota_{(X,\phi)}:\Gamma(X) \rightarrow \pi_1(S_g)$, defined in~\eqref{eq:phi_pi}, and that we
    have fixed a family of fundamental loops $a_1, \ldots, a_g, b_1, \ldots, b_g$ on $S_g$.  For
    every $i \in \{1, \ldots, g\}$, the group elements $\iota_{(X,\phi)}^{-1}(a_i)$ and
    $\iota_{(X,\phi)}^{-1}(b_i) \in \Gamma(X)$ each have two representatives in
    $\overline{\Gamma}(X)$; we pick $\tilde{a}_i$ and $\tilde{b}_i \in \overline{\Gamma}(X)$ to be
    their respective representatives with positive trace.  Then, the elements $-1$,
    $\tilde{a}_1, \ldots, \tilde{a}_g$, $\tilde{b}_1, \ldots, \tilde{b}_g$ generates the group
    $\overline{\Gamma}(X)$. For $\vectheta \in \RR^{2g}$, we define the unitary character
    $\chi_{(X,\phi,\vectheta)}:\overline\Gamma(X)\to\CC^\times$ by
    \begin{equation}
      \chi_{(X,\phi,\vectheta)}(\tilde{a}_j) = \chi_\vectheta(a_j) = e(\theta_j), \qquad 
      \chi_{(X,\phi,\vectheta)}(\tilde{b}_j) = \chi_\vectheta(b_j) = e(\theta_{j+g}) , \qquad \chi_{(X,\phi,\vectheta)}(-1)=-1,
    \end{equation}
    where $\chi_\vectheta$ is the character of $\pi_1(S_g)$ defined in \eqref{keychar}.
    
For a general $\eta\in\overline\Gamma$ and its associated loop $\gamma\in\pi_1(S_g)$, we have
$\chi_{(X,\phi,\vectheta)}(\eta)=\pm \chi_\vectheta(\gamma)$.    
    We capture the
    sign in this relation by defining $\sigma_\gamma\in\{\pm1\}$ as
    \begin{equation}
      \sigma_\gamma=\sign(\tr\eta)\chi_{(X,\phi,\vectheta)}(\eta) \chi_\vectheta(\gamma)^{-1} .
    \end{equation}
    We observe that positive trace is not preserved under multiplication, and $\sigma_{\gamma}$ is
    therefore not multiplicative in $\gamma$.  We have, however, the following.
\begin{lem}
$\sigma_{\gamma^{-1}}=\sigma_\gamma$ and more generally $\sigma_{\gamma^n}=\sigma_\gamma^n$ for
$n\in\ZZ$.  
\end{lem}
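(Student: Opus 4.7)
My plan is to reduce both claims to two ingredients: (i) multiplicativity of the two characters involved, and (ii) a sign identity for traces of powers of hyperbolic elements of $\SL(2,\RR)$. Before starting, I will observe that $\sigma_\gamma$ is genuinely a function of $\gamma$ rather than of the chosen lift $\eta \in \overline\Gamma(X)$ of the element of $\Gamma(X)$ associated to $\gamma$ under $\iota_{(X,\phi)}$: replacing $\eta$ by $-\eta$ multiplies $\sign(\tr\eta)$ by $-1$, and (since $\chi_{(X,\phi,\vectheta)}(-1)=-1$) multiplies $\chi_{(X,\phi,\vectheta)}(\eta)$ by $-1$ as well, leaving the product invariant. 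Having fixed such a lift $\eta$, the element $\eta^n \in \overline\Gamma(X)$ is a lift of the element of $\Gamma(X)$ corresponding to $\gamma^n$, so I may use $\eta^n$ to compute $\sigma_{\gamma^n}$.

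The key technical step is the identity $\sign(\tr\eta^n) = \sign(\tr\eta)^n$ for every hyperbolic $\eta \in \SL(2,\RR)$ and every $n \in \ZZ$. I establish this by diagonalising $\eta$ over $\RR$: its eigenvalues $\lambda$ and $\lambda^{-1}$ are real with $|\lambda|\neq 1$, and since $\lambda\cdot\lambda^{-1}=1$ they share a common sign, so $\tr\eta = \lambda+\lambda^{-1}$ has sign $\sign(\lambda)$ while $\tr\eta^n = \lambda^n+\lambda^{-n}$ has sign $\sign(\lambda)^n = \sign(\tr\eta)^n$. In particular, $\sign(\tr\eta^{-1}) = \sign(\tr\eta)$, which is the $n=-1$ case.

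Combining this trace identity with the multiplicativity of $\chi_{(X,\phi,\vectheta)}$ on $\overline\Gamma(X)$ and of $\chi_\vectheta$ on $\pi_1(S_g)$ yields
\[
\sigma_{\gamma^n}
= \sign(\tr\eta^n)\,\chi_{(X,\phi,\vectheta)}(\eta^n)\,\chi_\vectheta(\gamma^n)^{-1}
= \bigl(\sign(\tr\eta)\,\chi_{(X,\phi,\vectheta)}(\eta)\,\chi_\vectheta(\gamma)^{-1}\bigr)^n
= \sigma_\gamma^n,
\]
and the first assertion $\sigma_{\gamma^{-1}}=\sigma_\gamma$ is then the special case $n=-1$, using that $\sigma_\gamma\in\{\pm 1\}$ is its own inverse. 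I do not expect serious obstacles here; the only conceptual point worth flagging is that $\sigma$ is \emph{not} multiplicative on arbitrary products of loops---precisely because positive trace is not preserved under multiplication of hyperbolic matrices in general---but hyperbolicity forces powers of a single element to have real eigenvalues of a constant sign, which is exactly the structure that makes the power case work.
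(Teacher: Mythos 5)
Your proof is correct and follows essentially the same route as the paper's: reduce to multiplicativity of the two characters together with the fact that the sign of $\tr\eta^n$ is determined by that of $\tr\eta$. You supply slightly more detail than the paper (an explicit check that $\sigma_\gamma$ is independent of the chosen lift, and a diagonalisation argument for $\sign(\tr\eta^n)=\sign(\tr\eta)^n$, which the paper simply cites as a known fact), but the underlying argument is the same.
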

\begin{proof}
  To see this, note that if $\tr\eta>0$ then we have $\tr\eta^n>0$ (this is trivial for $\eta=1$,
  and a fact for all $\eta\in\SL(2,\RR)$ with $\tr\eta>2$). Hence, by the multiplicativity of
  characters, we have
  $\chi_{(X,\phi,\vectheta)}(\eta)^n=\chi_{(X,\phi,\vectheta)}(\eta^n)=\sigma_{\gamma^n}
  \chi_\vectheta(\gamma^n)=\sigma_{\gamma^n} \chi_\vectheta(\gamma)^n$, which yields
  $\sigma_{\gamma^n}=\sigma_\gamma^n$.
\end{proof}

This lemma allows us to write the right hand side of the trace formula \eqref{DiracSTF} in terms of geometric (in place of algebraic) data. We have
\begin{equation}\label{DiracSTF001}
   \sum_{j=0}^\infty h(r_j) = (g-1) \int_\RR h(r)\, r \coth(\pi r)\, dr + 
      \sum_{n=1}^{\infty} \sum_{\substack{\gamma\\ \mathrm{prim.}}} 
      \frac{\sigma_\gamma^n \ell_\gamma \, \hat h(n \ell_\gamma)\, e(\vectheta [\gamma] n)}{2 \sinh(n \ell_\gamma/2)}  .
    \end{equation}

We now sample $\vectheta\in\TT^{2g}$ with respect to the probability measure $\nu_q^{2g}$ as defined in Section~\ref{secInvariant}. Since the range of $\chi_{(X,\phi,\vectheta)}$ contains $-1$, it is natural to consider only even values of $q$ as well as $q=\infty$. 

The case $q=2$ corresponds to real characters $\overline{\chi}=\chi$, which means $\mathrm{D}_\chi$
is symmetric under time-reversal and we have Kramer's degeneracy \cite[Lemma 2]{BolteStiepan}; that is, every eigenvalue
$r_j$ has even multiplicity. This motivates the definition of the
following reduced counting function taking into account the systematic degeneracy,
\begin{equation}\label{CFct}
  \counting^q =
  \begin{cases}
    \frac 12 \sum_{j=0}^\infty h_{L,\tau}(r_j) & (q=2) \\[5pt]
    \sum_{j=0}^\infty h_{L,\tau}(r_j) & (q \geq 4) .
  \end{cases}
\end{equation}
As in the case of the Laplacian, we are interested in the large $g$ asymptotics of the spectral variance of the Dirac operator $D_\chi$ on $X$ with $\chi=\chi_{(X,\phi,\vectheta)}$,
\begin{equation}
\widetilde\Sigma_{g,q}^2(L,\tau; f) = \EE_{g,q} \bigg[ \big| \counting^q - \EE_{g,q} [\counting^q] \big|^2 \bigg] .
\end{equation}
The following theorem shows that the spectral statistics of a Dirac operator are also consistent with the expected random matrix distributions, i.e., GSE statistics in the presence of time-reversal symmetry, and GUE otherwise.
\begin{thm}
  \label{thm:dirac}
  Fix $\tau>0$. For $f:\RR\to\CC$ even with smooth, compactly supported Fourier transform, we have, for $L\geq 2$, 
  \begin{equation}
    \lim_{g\to\infty} \widetilde\Sigma_{g,q}^2(L,\tau; f) 
    = 
    \begin{cases}
      \Sigma_{\mathrm{GSE}}^2(f) +O(L^{-2}\log L) & (q=2) \\
      \Sigma_{\mathrm{GUE}}^2(f) +O(L^{-2}\log L) & (q\in2\NN+2, \; q=\infty) ,
    \end{cases}
  \end{equation}
  with the random matrix density $\Sigma_{\mathrm{GUE}}^2(f)$ defined in Theorem \ref{thm1} and
  \begin{equation}
    \Sigma_{\mathrm{GSE}}^2(f) = \int_\RR \frac{|x|}{2} \, |\hat f(x)|^2 dx.
\end{equation}
\end{thm}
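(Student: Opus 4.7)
My plan is to follow the strategy of Theorem \ref{thm1} but built on the Dirac trace formula \eqref{DiracSTF001}. The new feature is the sign $\sigma_\gamma^n\in\{\pm 1\}$ accompanying each geometric term. Crucially, $\sigma_\gamma$ is independent of $\vectheta$, so the fibre-Fourier machinery of Section \ref{secInvariant} and the length-holonomy formalism of Section \ref{secLength} apply verbatim, and the Dirac analogues of Lemmas \ref{lem1A} and \ref{lem2A} differ from the originals only by an additional factor $\sigma_\gamma^n$ (resp.\ $\sigma_{\gamma_1}^{n_1}\sigma_{\gamma_2}^{n_2}$, as $\sigma\in\RR$ makes conjugation trivial). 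All homological selection rules $n[\gamma]\equiv\trans\vecnull$ and $n_1[\gamma_1]\equiv n_2[\gamma_2]\bmod q\trans\ZZ^{2g}$ are inherited unchanged.

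First I would write
\begin{equation*}
\widetilde\Sigma_{g,q}^2(L,\tau;f)=c_q^2\bigl(\EE_{g,q}[|N^{\mathrm{osc}}|^2]-|\EE_{g,q}[N^{\mathrm{osc}}]|^2\bigr),\qquad c_2=\tfrac12,\ c_q=1\text{ for }q\geq 4,
\end{equation*}
where $N^{\mathrm{osc}}$ denotes the oscillatory side of \eqref{DiracSTF001}. For the mean, the analogue of Proposition \ref{mainprop1} reduces the problem to the single $\mcg$-orbit of a non-separating simple loop, for which I pick $\gamma=a_1$ so that $[\gamma]=\trans(1,0,\ldots,0)$ and $\sigma_{a_1}=+1$. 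The selection rule forces the surviving powers $n$ to be multiples of $q$, and since $q$ is even $\sigma_\gamma^{nq}=+1$; the Mirzakhani integration of \cite[§4.3]{Rudnick} then yields $I_{f,q}(L,\tau)$.

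For the second moment I would rerun the three-case analysis behind Proposition \ref{mainprop2}. Using $\sigma_{\gamma^{-1}}=\sigma_\gamma$, configurations (i) $\gamma_1=\gamma_2$ and (ii) $\gamma_1=\gamma_2^{-1}$ both carry a factor $\sigma_\gamma^{n_1+n_2}$. The homology constraint in (i) reads $n_1\equiv n_2\bmod q$ and in (ii) reads $q\mid n_1+n_2$; in both, $q$ even forces $n_1+n_2$ to be even, so $\sigma_\gamma^{n_1+n_2}=+1$ automatically and the Rudnick computation is preserved term by term. Case (i) contributes $\Sigma_{\mathrm{GUE}}^2(f)+O(L^{-2})$ for every even $q$ (and for $q=\infty$) via \eqref{sixsevenC}; case (ii) contributes another $\Sigma_{\mathrm{GUE}}^2(f)+O(L^{-2})$ only when the leading pair $n_1=n_2=1$ satisfies $q\mid 2$, i.e.\ exactly when $q=2$. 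Case (iii) (disjoint loops) decouples as in \cite[Lemma 5.3]{Rudnick} into $|I_{f,q}(L,\tau)|^2$, which cancels $|\EE_{g,q}[N^{\mathrm{osc}}]|^2$. Sub-leading terms with $n_1+n_2\geq 3$ remain $O(L^{-2}\log L)$ by \cite[Lemma 5.2]{Rudnick}, since $|\sigma_\gamma|=1$ introduces no error inflation.

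Assembling: for $q=2$ the sum of (i) and (ii) equals $2\Sigma_{\mathrm{GUE}}^2(f)$, and the Kramer prefactor $c_2^2=\tfrac14$ converts this into $\Sigma_{\mathrm{GSE}}^2(f)=\tfrac12\Sigma_{\mathrm{GUE}}^2(f)$; for $q\in 2\NN+2$ or $q=\infty$ only (i) survives and $c_q^2=1$ delivers $\Sigma_{\mathrm{GUE}}^2(f)$. I expect the main subtlety to be exactly this parity bookkeeping at $q=2$: the apparent doubling from case (ii) combines precisely with the Kramer factor $1/4$ to produce the characteristic halving that defines GSE, while for $q\geq 4$ case (ii) is killed by the selection rule and the GUE answer is left intact. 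The key structural observation that makes the Dirac argument a clean copy of the Laplacian one is that, whenever the homology constraint admits a contributing term, $n_1+n_2$ is forced to be even and the spin sign $\sigma_\gamma^{n_1+n_2}$ disappears.
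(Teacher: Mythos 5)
Your proposal is correct and follows essentially the same route as the paper: both reduce to Dirac analogues of Propositions \ref{mainprop1} and \ref{mainprop2}, exploit the $\vectheta$-independence of $\sigma_\gamma$ and the identity $\sigma_{\gamma^{-1}}=\sigma_\gamma$, and observe that the homological selection rules for even $q$ force $\sigma_\gamma^{n_1+n_2}=1$ (the paper phrases this as $\sigma_\gamma^{n_1\pm n_2}=\sigma_\gamma^q=1$, you phrase it as ``$n_1+n_2$ is forced to be even''; these are equivalent since $\sigma_\gamma^2=1$). The bookkeeping at $q=2$ — two constellations giving $2\Sigma_{\mathrm{GUE}}^2=\Sigma_{\mathrm{GOE}}^2$ times the Kramer prefactor $\tfrac14$ yielding $\Sigma_{\mathrm{GSE}}^2$ — matches the paper exactly.
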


The proof of Theorem \ref{thm:dirac} mirrors that of Theorem \ref{thm1}, and the key steps are the following 
analogues of Propositions \ref{mainprop1} and \ref{mainprop2}.


\begin{prop}\label{mainprop1B}
Under the assumptions for Theorem \ref{thm:dirac}, we have 
\begin{equation}\label{expected007}
\lim_{g\to\infty} \EE_{g,q} \Bigg[ \sum_{n=1}^{\infty} \sum_{\substack{\gamma\\ \mathrm{prim.}}} 
      \frac{\sigma_\gamma^n \ell_\gamma \, \hat h_{L,\tau}(n \ell_\gamma)\, e(\vectheta [\gamma] n)}{2 \sinh(n \ell_\gamma/2)} \Bigg]
 = I_{f,q}(L,\tau) ,
\end{equation}
with $I_{f,q}(L,\tau)$ as in \eqref{Ifq}.
\end{prop}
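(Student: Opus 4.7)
The plan is to parallel the proof of Proposition \ref{mainprop1} step by step, the key new observation being that on the range of parameters which survives the large-$g$ limit, the sign $\sigma_\gamma^n$ collapses to $+1$, so that the Laplacian calculation transfers verbatim.

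First I would establish a Dirac version of Lemma \ref{lem1A}. The sign $\sigma_\gamma = \sign(\tr\eta)\chi_{(X,\phi,\vectheta)}(\eta)\chi_\vectheta(\gamma)^{-1}$ is in fact independent of $\vectheta$: on any word in the positive-trace generators $\tilde a_i, \tilde b_i$, the characters $\chi_{(X,\phi,\vectheta)}$ and $\chi_\vectheta$ agree up to a $\vectheta$-independent sign that records insertions of $-1\in\overline{\Gamma}$ forced by the positive-trace convention. Consequently $\sigma_\gamma^n$ factors out of the fiber Fourier transform on the $\TT^{2g}$ fibre, and the orbit decomposition of the loop-sum followed by Proposition \ref{WPint} applied to the zeroth Fourier coefficient yields
\begin{equation*}
  \EE_{g,q}\bigg[\sum_{n\geq 1}\sum_{\substack{\gamma\\ \mathrm{prim.}}} \frac{\sigma_\gamma^n \ell_\gamma\,\hat h_{L,\tau}(n\ell_\gamma)\,e(\vectheta[\gamma]n)}{2\sinh(n\ell_\gamma/2)}\bigg] = \EE_g^{\mathrm{WP}}\bigg[\sum_{n\geq 1}\sum_{\substack{\gamma\ \mathrm{prim.}\\ n[\gamma]\equiv\trans\vecnull\bmod q\trans\ZZ^{2g}}} \sigma_\gamma^n\,\frac{\ell_\gamma \hat h_{L,\tau}(n\ell_\gamma)}{2\sinh(n\ell_\gamma/2)}\bigg]
\end{equation*}
with the mod-$q$ condition omitted when $q=\infty$. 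Since $|\sigma_\gamma|=1$, the Mirzakhani-volume estimates of \cite[§4.3]{Rudnick} that bound the non-dominant orbits in the Laplacian case transfer without change, leaving only the $\mcg$-orbit of a simple non-separating oriented primitive loop $\gamma_0$ with $[\gamma_0]=\trans(1,0,\ldots,0)$ to be treated in the limit $g\to\infty$; every loop $\gamma$ in this orbit satisfies $\gcd([\gamma],q)=1$, hence $q_\gamma^*=q$.

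The decisive point is now that in the Dirac setting $q\in 2\NN\cup\{\infty\}$, forced by the requirement $\chi(-1)=-1$. For finite even $q$, the condition $n[\gamma]\equiv\trans\vecnull\bmod q\trans\ZZ^{2g}$ combined with $\gcd([\gamma],q)=1$ forces $n$ to be a positive multiple of $q$, in particular \emph{even}, whence $\sigma_\gamma^n=1$ uniformly on the dominant orbit. The surviving expression is therefore identical to its Laplacian counterpart, and Mirzakhani's integration formula together with the volume asymptotics of \cite{Mirzakhani07,Mirzakhani13} converts it to $I_{f,q}(L,\tau)$ exactly as in the proof of Proposition \ref{mainprop1}. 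The case $q=\infty$ is even simpler: the constraint $n[\gamma]=\trans\vecnull$ is incompatible with the non-separating representative $\gamma_0$, so the limit vanishes, consistent with $I_{f,\infty}(L,\tau)=0$. The only non-trivial new ingredient I anticipate is the $\vectheta$-independence of $\sigma_\gamma$, which reduces to careful bookkeeping of the positive-trace convention along arbitrary words in the generators; once that is in hand the rest is a faithful transcription of Rudnick's Laplacian argument.
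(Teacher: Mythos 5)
Your proposal is correct and follows essentially the same route as the paper: restrict to the dominant orbit of simple non-separating loops (where $\gcd[\gamma]=1$, hence $q_\gamma^*=q$), observe that the fiber average forces $n$ to be a multiple of $q$, which is even, so $\sigma_\gamma^n=1$ on all surviving terms, and then invoke the Laplacian argument of Proposition~\ref{mainprop1} verbatim; the $q=\infty$ case vanishes because non-separating loops have $[\gamma]\neq\trans\vecnull$. The $\vectheta$-independence of $\sigma_\gamma$ that you flag as a possible subtlety is indeed what lets the sign factor sit outside the fiber Fourier transform, and it is immediate from the definition of $\chi_{(X,\phi,\vectheta)}$ on the positive-trace lifted generators — the paper uses this implicitly without comment.
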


\begin{proof}
  The same argument as in proof of Proposition \ref{mainprop1} tells us that only primitive loops
  with $\gcd[\gamma]=1$ contribute to the leading order term.  The analogous calculation as in Lemma
  \ref{lem1A} shows that, for the restriction of the sum over primitive geodesics $\gamma$ with
  $\gcd[\gamma]=1$, we have
\begin{equation} \label{lem1Aeq222}
      \EE_{g,q} \Bigg[ \sum_{n=1}^{\infty} \sum_{\substack{\gamma\\ \mathrm{prim.}\\ \gcd[\gamma]=1}} 
      \frac{\sigma_\gamma^n \, \ell_\gamma\, \hat h_{L,\tau}(n \ell_\gamma)\, e(\vectheta [\gamma] n)}{2 \sinh(n \ell_\gamma/2)} \Bigg]
      = 
      \begin{cases}
      0 & (q=\infty) \\
      \displaystyle \EE_g^{\mathrm{WP}} \bigg[\sum_{n=1}^{\infty} \sum_{\substack{\gamma\\ \mathrm{prim.}\\ \gcd[\gamma]=1}}
      \frac{\ell_\gamma \hat h_{L,\tau}(n q \ell_\gamma)}{2 \sinh(n q \ell_\gamma/2)} \bigg] & (q\in2\NN) .
      \end{cases}
  \end{equation}
  The key point here is that, for $q<\infty$, the condition $\gcd[\gamma]=1$ restricts the sum over $n$ to multiples
  of $q$ (as in Lemma \ref{lem1A}). These are even (since $q$ is), and hence
  $\sigma_\gamma^n=1$ for such $n$. For $q=\infty$, the average restricts the sum to $[\gamma]=\trans\vecnull$ (cf.~Lemma \ref{lem1A}), for which $\gcd[\gamma]=0$ and hence these terms do not contribute. We can now proceed as in the proof of
  Proposition \ref{mainprop1}.
\end{proof}
 
 \begin{prop}\label{mainprop2B}
Under the assumptions for Theorem \ref{thm:dirac}, we have 
\begin{multline}\label{main-prop2B}
\lim_{g\to\infty}  \EE_{g,q} \Bigg[ \bigg| \sum_{n=1}^{\infty} \sum_{\substack{\gamma\\ \mathrm{prim.}}} 
      \frac{\sigma_\gamma^n \ell_\gamma\, \hat h_{L,\tau}(n \ell_\gamma)\, e(\vectheta [\gamma] n)}{2 \sinh(n \ell_\gamma/2)} \bigg|^2\Bigg]
 \\
 = 
\begin{cases}
\Sigma_{\mathrm{GOE}}^2(f) +|I_{f,q}(L,\tau)|^2 +O(L^{-2}\log L) & (q=2)\\
\Sigma_{\mathrm{GUE}}^2(f) +|I_{f,q}(L,\tau)|^2 +O(L^{-2}\log L) & (q\in2\NN+2, \; q=\infty) .
\end{cases}
\end{multline}
\end{prop}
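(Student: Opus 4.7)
The plan is to follow the three-case analysis of Proposition \ref{mainprop2} while tracking the additional signs $\sigma_\gamma^n$ that decorate each term on the geometric side of the Dirac trace formula \eqref{DiracSTF001}. As a first step I would establish the Dirac analogue of Lemma \ref{lem2A}: expanding the squared modulus and grouping the double sum by $\mcg$-orbits $(\gamma_1,\gamma_2)$, Proposition \ref{WPint} reduces the $\mu_{g,q}$-expectation to a $\mu_g^{\mathrm{WP}}$-expectation of zeroth Fourier coefficients. Since $\sigma_\gamma$ is independent of $\vectheta$, the fiber integration over $\vectheta$ is identical to that in Lemma \ref{lem2A} and again produces the constraint $n_1[\gamma_1]=n_2[\gamma_2]\bmod q\trans\ZZ^{2g}$ (no ``$\bmod q$'' for $q=\infty$), now decorated by a real factor $\sigma_{\gamma_1}^{n_1}\sigma_{\gamma_2}^{n_2}\in\{\pm 1\}$. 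Because $|\sigma_\gamma^n|=1$, all absolute-value bounds from \cite[\S 5.2, \S 6]{Rudnick} controlling non-simple, intersecting, or high-multiplicity configurations apply verbatim, yielding the $O(L^{-2}\log L)$ error.

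The leading-order contributions come from the same three configurations of simple non-separating oriented loops used in Proposition \ref{mainprop2}: (i) $\gamma_1=\gamma_2=\gamma$ with $[\gamma]=\trans(1,0,\ldots,0)$, (ii) $\gamma_1=\gamma_2^{-1}$, and (iii) disjoint $\gamma_1,\gamma_2$ with $[\gamma_1]=\trans(1,0,\ldots,0)$, $[\gamma_2]=\trans(0,1,0,\ldots,0)$. The crucial observation is that in every surviving leading-order term the sign factor collapses to $+1$. In case (i) the pair $(n_1,n_2)=(1,1)$ satisfies the constraint $n_1\equiv n_2\bmod q$ and gives $\sigma_\gamma^{1+1}=1$; terms with $n_1+n_2\geq 3$ are $O(L^{-2}\log L)$ by the Rudnick bound. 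In case (ii) the identity $\sigma_{\gamma^{-1}}=\sigma_\gamma$ converts the sign factor to $\sigma_{\gamma_2}^{n_1+n_2}$; the constraint $n_1\equiv -n_2\bmod q$ admits $n_1=n_2=1$ only for $q=2$ (where again $\sigma_{\gamma_2}^2=1$), forces $n_1+n_2\geq 3$ for $q\in 2\NN+2$, and has no positive-integer solutions for $q=\infty$. In case (iii) the constraint for $q<\infty$ forces $n_1,n_2\in q\NN$, hence $n_1,n_2$ are even since $q$ is even, so $\sigma_{\gamma_i}^{n_i}=1$; for $q=\infty$ case (iii) is empty.

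With the signs disposed of, the remaining Weil-Petersson computations are exactly those in the proof of Proposition \ref{mainprop2}. Case (i) produces $\Sigma_{\mathrm{GUE}}^2(f)+O(L^{-2})$ via the identity \eqref{sixsevenC} derived from Mirzakhani's integration formula and volume asymptotics; case (ii) contributes an additional $\Sigma_{\mathrm{GUE}}^2(f)$ for $q=2$, so that cases (i)+(ii) sum to $2\Sigma_{\mathrm{GUE}}^2(f)=\Sigma_{\mathrm{GOE}}^2(f)$ in that regime, and only $\Sigma_{\mathrm{GUE}}^2(f)$ for $q\in 2\NN+2\cup\{\infty\}$; case (iii) factorises in the large-$g$ limit by the decoupling argument of \cite[Lemma 5.3]{Rudnick} and produces $|I_{f,q}(L,\tau)|^2$, with the very same $I_{f,q}$ already identified in Proposition \ref{mainprop1B}. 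Adding the three contributions yields the stated asymptotics.

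The main obstacle is bookkeeping rather than new analysis: one must confirm that in each of the three leading-order configurations, the arithmetic of the mod-$q$ constraint conspires with the parity of $q$ to force $\sigma_{\gamma_1}^{n_1}\sigma_{\gamma_2}^{n_2}=1$. This is precisely why the natural range of $q$ in the Dirac setting is $2\NN\cup\{\infty\}$ (odd $q$ being incompatible with $\chi(-1)=-1$), and why case (ii) survives only at $q=2$ --- which, after the factor-$\tfrac12$ reduction of the counting function in \eqref{CFct} that squares to $\tfrac14$, is exactly what converts $\Sigma_{\mathrm{GOE}}^2(f)$ into $\Sigma_{\mathrm{GSE}}^2(f)=\tfrac12\Sigma_{\mathrm{GUE}}^2(f)$ in the proof of Theorem \ref{thm:dirac}.
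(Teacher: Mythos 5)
Your proposal is correct and follows essentially the same route as the paper: reduce to the $\mcg$-orbit decomposition, isolate the three leading-order configurations of simple non-separating loops from Proposition~\ref{mainprop2}, and verify that the mod-$q$ constraint combined with the parity of $q$ and the identity $\sigma_{\gamma^{-1}}=\sigma_\gamma$ forces the sign $\sigma_{\gamma_1}^{n_1}\sigma_{\gamma_2}^{n_2}$ to equal $+1$ on every surviving term, so the computation reduces verbatim to the Laplacian case. The paper's proof is terser (it shows the sign is trivial for \emph{all} admissible $(n_1,n_2)$ in one line, whereas you isolate the minimal $(n_1,n_2)$ and push the rest into the absolute-value error bound), but the argument is the same.
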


\begin{proof}
As in the proof of Proposition \ref{mainprop2}, the only leading order terms in the high genus limit come again from pairs $(\gamma_1,\gamma_2)$ of simple non-separating oriented loops, such that one of the following holds:
(i) $\gamma_1=\gamma_2$,
(ii) $\gamma_1=\gamma_2^{-1}$, or
(iii) $\gamma_1\cap \gamma_2=\emptyset$.
To evaluate these terms, we follow the argument used to prove Proposition \ref{mainprop2}, where we showed that in case (i) we have $n_1=n_2\bmod q$, in case (ii) $n_1=-n_2\bmod q$, and in case (iii) $n_1=n_2=0\bmod q$. Thus, if $q\in 2\NN$, we have for (i) and (ii) that
$\sigma_{\gamma_1}^{n_1} \sigma_{\gamma_2}^{n_2} = \sigma_{\gamma_1}^{n_1+n_2} = \sigma_{\gamma_1}^{n_1-n_2} = \sigma_{\gamma_1}^{q} =1$, and for (iii) 
$\sigma_{\gamma_1}^{n_1} \sigma_{\gamma_2}^{n_2} = \sigma_{\gamma_1}^{q} \sigma_{\gamma_2}^{q} = 1$. The case $q=\infty$ is analogous (drop the mod $q$).
This proves that the leading order terms in the high genus limit are exactly the same as in Proposition \ref{mainprop2}. 
\end{proof}

\begin{proof}[Proof of Theorem \ref{thm:dirac}] 
We apply the Selberg trace formula \eqref{DiracSTF001} for the Dirac operator with test function $h=h_{L,\tau}$ in order to convert the spectral variance to the geometric sums of Propositions \ref{mainprop1B} and \ref{mainprop2B}. Hence the claim is proved when $q=4,6,\ldots,\infty$. For the remaining case $q=2$, we observe that due to time-reversal symmetry ($\chi$ is real-valued) the counting function \eqref{CFct} is reduced by a factor of 2 to account for Kramer's degeneracy. Hence we should only take $\frac12$ of \eqref{expected007}, and $\frac14$ of \eqref{main-prop2B}. This indeed yields the formula for the GSE density, since $\Sigma_{\mathrm{GSE}}^2=\frac14 \Sigma_{\mathrm{GOE}}^2$.
\end{proof}

\end{document}